\pgfplotsset{compat=1.16}
\definecolor{cobalt}{rgb}{0.0, 0.28, 0.67}
\definecolor{darkcerulean}{rgb}{0.03, 0.27, 0.49}
\newcommand{\omitit}[1]{}
\newcommand{\ebar}{\overline{e}}
\newcommand{\tbar}{\overline{t}}
\newcommand{\tet}{\tau}
\newcommand{\mesh}{{\mathcal T}}
\newcommand{\meshW}{\mesh_{{W}}}
\newcommand{\meshA}{\mesh_{{A}}}
\newcommand{\meshP}{\mesh_{{P}}}
\newcommand{\curl}{{{\rm curl} \,}}
\newcommand{\sdiv}{{{\nabla\cdot} \,}}
\newcommand{\vv}{{\bf v}}
\newcommand{\set}[2]{\left\lbrace #1 \; : \; #2 \right\rbrace}
\newcommand{\half}{\tfrac{1}{2}}
\newcommand{\vhk}{{\bf V}}
\newcommand{\bS}{{\bf S}}
\newcommand{\bUps}{{\bf \Upsilon}}
\newcommand{\bZ}{{\bf Z}}
\newcommand{\piq}{\Pi}
\newcommand{\edgnam}{\epsilon}
\newtheorem{Theorem}{Theorem}%[section]
\newtheorem{theorem}[Theorem]{Theorem}
\newtheorem{lemma}[Theorem]{Lemma}
\newtheorem{proposition}[Theorem]{Proposition}
\newtheorem{remark}[Theorem]{Remark}
\newtheorem{example}[Theorem]{Example}
\begin{document}

\title{Dimensions of exactly divergence-free finite element spaces in 3D}
\date{\today}

\author[L.~R.~Scott]{L.~Ridgway Scott}
\author[T.~Tscherpel]{Tabea Tscherpel}

\address[L.~R.~Scott]{The University of Chicago, Emeritus, Chicago, Illinois, 60637}
\address[T.~Tscherpel]{Department of Mathematics, Technische Universität Darmstadt, Dolivostr. 15, 64293 Darmstadt,	Germany}

\email{ridg@uchicago.edu}
\email{tscherpel@mathematik.tu-darmstadt.de}

\begin{abstract}
We examine the dimensions of various inf-sup stable mixed finite element spaces on tetrahedral meshes in 3D with exact divergence constraints. 
More precisely, we compare the standard Scott--Vogelius elements of higher polynomial degree and low order methods on split meshes, the Alfeld and the Worsey--Farin split. 
	The main tool is a counting strategy to express the degrees of freedom for given polynomial degree and given split in terms of few mesh quantities, for which bounds and asymptotic behavior under mesh refinement is investigated. 
	Furthermore, this is used to obtain insights on potential precursor spaces in the Stokes
 complex for finite element methods on the Worsey--Farin split. 
\end{abstract}

\subjclass[2020]{
65N30 %inite element, Rayleigh-Ritz and Galerkin methods for boundary value problems involving PDEs
65N50%Mesh generation, refinement, and adaptive methods for boundary value problems involving PDEs
}
\keywords{finite elements, inf-sup stability, split mesh methods, mesh refinement, Euler characteristic}

\maketitle

%-------------------------------------------------------------------------------
\section{Introduction}

Mixed finite element spaces for the Stokes and Navier--Stokes equations have been developed 
and investigated since the `70s, see, e.g.,~\cite{TH.1973,CR.1973,ABF.1984, BR.1985,lrsBIBbk} 
and~\cite{BBF.2013} for a review. 
Inf-sup stability~\cite{lrsBIBgd} of the mixed pair of finite element spaces is a necessary condition to ensure well-posedness and stability of the discrete solutions. 
However, in general inf-sup stable pairs do not lead to exactly divergence-free discrete velocity functions. 
This is the case for most classical mixed pairs such as the Taylor--Hood~\cite{TH.1973}, the Crouzeix--Raviart~\cite{CR.1973}, the MINI~\cite{ABF.1984},
and the Bernardi--Raugel~\cite{BR.1985} elements. 
Yet, exact incompressibility constraints are beneficial for example for pressure robust approximation~\cite{john2016divergence}. 
The class of mixed finite element spaces with exact divergence constraints has been reviewed by Neilan in~\cite{neilan2020stokes}. 
As noted therein, in three dimensions only a few inf-sup stable pairs of spaces with exact divergence-constraints are available, and the analysis of such methods is still far from complete. 

In the following we consider an open bounded domain $\Omega \subset \mathbb{R}^3$ with polytopal boundary $\partial \Omega$.  
Throughout, $\mesh$ denotes a conforming tetrahedral triangulation (or mesh) of $\overline{\Omega} \subset \mathbb{R}^3$ into closed tetrahedra,  cf.~\cite[Ch.~2, ($\mesh_h$1)-($\mesh_h$5)]{Ci.2002}. 
See also~\cite[Sec.~2.3]{ref:Freudenthalulation} which refers to this as a 
{\em consistent} triangulation. 
This is then a pure simplicial $3$-complex in the language of algebraic topology. 

We consider pairs of finite element spaces consisting of 
\begin{itemize}
\item
a velocity space $\vhk_k(\mesh)\subset C(\overline{\Omega})$, which is the space of continuous vector-valued functions that are piecewise
polynomials of degree at most $k$ on the mesh $\mesh$, and
\item
a pressure space $\sdiv\vhk_k(\mesh) \subset L^\infty(\Omega)$ which is a subspace of $\piq_{k-1}(\mesh)$, the 
space of discontinuous piecewise polynomials of degree at most $k-1$ on the same tetrahedral mesh,
\end{itemize}
for $k \geq 1$. 
Since the pressure space is the divergence of the velocity space, discretely divergence-free velocity functions are indeed exactly divergence-free. 
Also, for both the inf-sup condition to be satisfied and the discretely divergence-free velocity functions to be exactly divergence-free, the pressure space necessarily has to be the space of divergence of the velocity functions, cf.~\cite{neilan2020stokes}. 
Starting from this ansatz the challenge is to characterize the pressure space.

The 2D version of these elements are the Scott--Vogelius elements~\cite{lrsBIBbk,ScottVogeliusA,GS.2019}. 
For those elements on general meshes $\mesh$ the pressure space can be locally characterized by considering the so-called singular vertices. 
Those are vertices that have (only) adjacent edges that lie on two straight lines. 
Furthermore, they are known to be inf-sup stable for polynomial degrees~$k\geq 4$, see~\cite{GS.2019}.
In 3D the situation is more challenging: the pressure spaces have not been characterized and inf-sup stability is available only in special cases. 
More specifically, it has been proved in~\cite{zhang2011divergence} that in 3D on a particular family of regular meshes  $\mesh_h$, the so-called Freudenthal triangulations~\cite{ref:Freudenthalulation}, the pair $(\vhk_k(\mesh_h), \sdiv \vhk_k(\mesh_h))$ satisfies the inf-sup condition for polynomial degrees $k\geq 6$. 
Recent computational experiments on this family of regular meshes~\cite{lrsBIBiu} suggest that the inf-sup condition holds for $k\geq 4$ with constant independently of the mesh size~$h$.  

For the purpose of minimizing the dimension of the discrete problem there has been significant interest in spaces of lower polynomial degree. 
To achieve inf-sup stability for standard piecewise polynomial functions for lower 
polynomial-degree $k$,
 split meshes have been considered, cf.~\cite{ref:zhang3DalfeldSplit,ref:quadraticPowellSabinTets,FGNZ.2022} and the review in~\cite{neilan2020stokes}. 
Even though the lower-order methods on split meshes are essentially the Scott--Vogelius method on a special mesh, for clarity we shall refer to them by the name of the split.  
We use the term Scott--Vogelius elements only to refer to the methods on the original mesh. 
The term `macro-element' can be used to describe the split elements, but this term is
also used in theoretical arguments where the grouping of sub-elements is more {\em ad hoc}.
Thus, we use the terminology `split method' to avoid confusion. 

Here we show that in 3D the discrete spaces for all but one  low-order method on split meshes considered to date have a substantially larger dimension than the 3D version of the Scott--Vogelius method using $(\vhk_k(\mesh),\sdiv\vhk_k(\mesh))$ for $k=4$ on the original mesh. 
Even for $k=6$, this pair has only at most 50\% higher dimension than most of the lowest-order split methods considered here. 
The only exception to this is the lowest-order case on the so-called Worsey--Farin split mesh. 
By comparing the actual number of degrees of freedom rather than the polynomial degree, we find that the Scott--Vogelius method is competitive or at least not much more costly than standard low-order methods. 
Also this argument has not even taken into account the improved approximability results available for higher order finite element spaces. 
It is notable that the only method that is more efficient is the lowest-order method. 
As soon as one considers methods of higher order than linear, the Scott--Vogelius element for $k = 4$ is the most efficient (by a factor $2$). 

Our observations suggest that there is need for further investigation of the Scott--Vogelius element in three dimensions. 
In particular insights into meshes with singularities are of interest, i.e., meshes for which $\sdiv\vhk_k(\mesh)$ is a strict subspace of $\piq_{k-1}(\mesh)$.  
Note that nearly singular meshes, that is meshes that are very close to singular meshes, typically have a large inf-sup constant leading to degraded performance~\cite{lrsBIBgd,lrsBIBih}. 

We focus our attention to methods on tetrahedral meshes that use a Lagrange space on the original or on some split mesh as velocity space 
and the divergence thereof as pressure space. 
Note however, that outside of this class there are alternative elements satisfying exact 
divergence-constraints, see,  e.g.,~\cite{falk2013stokes,guzman2014conforming,GN.2014b,neilan2016stokes}. 
Hence they are suited for pressure robust approximation, cf.~\cite{john2016divergence}. 
For a comparison of the dimensions we shall briefly report on low order exactly divergence-free and on only approximately divergence-free methods. 	
Nodal variables for hexahedral meshes~\cite{neilan2016stokes}
are easier to count thanks to the type of underlying subdivision. 
Since such meshes are typically structured, determining the number of degrees of freedom of finite element spaces thereon is relatively straight-forward.  

Our main tool for determining the number of degrees of freedom of the respective finite element spaces is a counting argument for mesh quantities in 3D. 
Similar methods have been applied in~\cite{neilan2020stokes} to show surjectivity of a discrete divergence operator and hence to prove discrete inf-sup stability. 
Furthermore, the author of~\cite{neilan2020stokes} has used such arguments to obtain insights into discrete Stokes complexes.  
The numbers of tetrahedra, faces, edges, and vertices in a mesh can be expressed in just two parameters, for example the number of vertices and the average number of edges emanating from each vertex in the mesh. 
On the latter we obtain estimates. 
Furthermore, under uniform mesh refinement their asymptotic behavior is investigated in~\cite{PR.2003.c,PR.2005}, which we shall review. 
Indeed, for a range of uniform mesh refinements
 the average mesh quantities converge to the ones attained by certain regular meshes, as the mesh is refined. 
 This includes the 3D red refinement~\cite{B.1995,Z.1995,moore1995adaptive}, bisection refinement such as the generalization of the newest vertex bisection~\cite{B.1991, K.1994} as well as the longest edge bisection~\cite{R.1991}, and the Carey--Plaza algorithm~\cite{PC.2000}. 
 Consequently, 
 we obtain estimates for the mesh quantities and then also on the number of degrees of freedom in the finite element spaces under consideration. 
 This allows for a comparison with other mixed finite element spaces. 
 
Counting degrees of freedom for finite element spaces only provides part of the information required to evaluate the effectiveness of the corresponding simulations.
The relationship between the size of the resulting linear systems and their computational cost is not simple. 
Depending on the implementation, the relevant dimension may be the total dimension of both velocity and pressure space (if the full saddle point problem is solved), or only the dimension of the velocity space (when the Iterated Penalty Method (IPM)~\cite{lrsBIBgd,lrsBIBih} is used). 
Furthermore, techniques such as static condensation~\cite{cockburn2016static,wilson1974static}
can reduce the computational cost.  
Another aspect of computational cost is related to the approximation degree.
Higher-order elements may achieve the same accuracy on a coarser mesh in some cases. 
Investigating these aspects is beyond the scope of this work and left to future work. 
Here we only determine the dimension of the discrete spaces.  

 The corresponding results in 2D are much simpler and will be provided for comparison. 

\subsubsection*{Outline} 
In section~\ref{sec:splitmesh} we review mesh splits available in the literature both in 2D and in 3D. 
For general 3D tetrahedral conforming meshes we present an approach to count the mesh quantities in section~\ref{sec:meshquantities}.  
The resulting formul{\ae} involve the average number $\ebar$ of edges adjacent to a vertex in the mesh. 
We derive bounds and review its behavior under certain mesh refinements to determine realistic values of $\ebar$. 
Applying the counting methods allows us to determine the dimensions of various mixed finite element pairs in section~\ref{sec:pposm}. 
Furthermore, applying the counting strategy yields insights into the Stokes complex for the Worsey--Farin split presented in section~\ref{sec:compldig}. 
Finally, section~\ref{sec:conclu} summarizes our conclusions. 

\section{Split meshes}
\label{sec:splitmesh}

There are a number of split meshes that allow for exact divergence constraints 
to be satisfied for low-order finite elements on them. 
Perhaps the earliest such method uses the two-dimensional Malkus crossed-triangle mesh
\cite{ref:MalkusHughesmixedunify,malkus1984linear}, which is based on subdividing quadrilaterals into four triangles and uses piecewise linear velocity functions. 
The Malkus crossed-triangle split introduces singular vertices~\cite{lrsBIBaf,lrsBIBbk,ScottVogeliusA} at the
cross-points in the center of each quadrilateral. 
Since then, a range of split-mesh methods have been developed for simplicial triangulations with the goal of achieving inf-sup stability for low polynomial degrees, see~\cite{neilan2020stokes} for a review. 
Let us summarize the split methods leading to exact divergence constraints which we investigate in the following. 
To motivate and clarify the discussion we shall begin with the two-dimensional situation and then proceed with the three-dimensional situation. 

\subsection{Splits in 2D}
There are two ways of splitting a triangle $\tet$. 

The \emph{Alfeld split} (sometimes referred to as \emph{barycentric split}) splits each triangle into three triangles and creates one new vertex. 
This is done by choosing one interior point, e.g., the barycenter of $\tet$, and introducing three edges that connect each vertex of $\tet$ with this interior point, see~Fig.~\ref{fig:alfeldsplit-2d} (left). 
The Alfeld split eliminates any singular or nearly singular vertices, cf.~\cite{arnold1992quadratic}. 

The \emph{Powell--Sabin split}~\cite{PS.1977,ref:GuzmanLisNeilanPowelSabin} splits each triangle $\tet$ into six triangles and is a refinement of the Alfeld split, cf.~Fig.~\ref{fig:alfeldsplit-2d} (right). 
Starting from an Alfeld split each triangle is split again by connecting the interior points of any two triangles in the original mesh that share an edge. 
 This results in a total of six new edges in the interior of $\tet$.  
It creates collinear edges in any two edge-adjacent triangles, and thus  introduces singular vertices at the intersection of the original edges with the newly introduced edges. 
One advantage of the Powell--Sabin split is that the angles of the resulting triangulation are more 
favorable than the ones in the Alfeld split, since large angles are bisected. 
Nevertheless, the Powell--Sabin split reduces the angles in the original triangulation. 
The following lemma is established by enumerating the mesh entities introduced by the split.

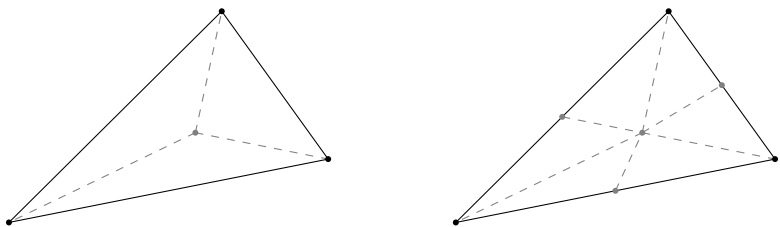
\begin{figure}[ht!]
	\centering
	\begin{tikzpicture}[scale=2.8]
		
		\begin{scope}
			\coordinate (T0) at (0,0);
			\coordinate (T2) at (1.5,0.3);
			\coordinate (T3) at (1,1);
			\coordinate (M) at (0.876,0.425);;
			\fill[gray] (M) circle (0.4pt);
			\draw[gray,dashed] (M) -- (T0);
			\draw[gray,dashed] (M) -- (T2);
			\draw[gray,dashed] (M) -- (T3);
			
			\draw (T0) -- (T3)--(T2) -- cycle;
			\fill (T0) circle (0.4pt);
			\fill (T2) circle (0.4pt);
			\fill (T3) circle (0.4pt);
		\end{scope}
		
		\begin{scope}[xshift = 2.1cm]
			
			\coordinate (T0) at (0,0);
			\coordinate (T2) at (1.5,0.3);
			\coordinate (T3) at (1,1);
			\coordinate (M) at (0.876,0.425);;
			\fill[gray] (M) circle (0.4pt);
			\draw[gray,dashed] (M) -- (T0);
			\draw[gray,dashed] (M) -- (T2);
			\draw[gray,dashed] (M) -- (T3);

			\draw (T0) -- (T3)--(T2) -- cycle;
			\fill (T0) circle (0.4pt);
			\fill (T2) circle (0.4pt);
			\fill (T3) circle (0.4pt);

			\coordinate (e0) at (1.25,0.65);
			\coordinate (e2) at (0.5,0.5);
			\coordinate (e3) at (0.75,0.15);
			% red
			\draw[gray,dashed] (M) -- (e0);
			\draw[gray,dashed] (e2) -- (M);
			\draw[gray,dashed] (e3) -- (M);
			
			\fill[gray] (e0) circle (0.4pt);
			\fill[gray] (e3) circle (0.4pt);	
			\fill[gray] (e2) circle (0.4pt);
						
		\end{scope}
	\end{tikzpicture}
	\caption{Alfeld split into $3$ new triangles (left) and  Powell--Sabin split split into $6$ new triangles each (right) in 2D with the new edges (dashed) and new vertices in gray. 
	}
	\label{fig:alfeldsplit-2d}
\end{figure}

\begin{lemma}[mesh quantities for 2D split meshes]\label{lem:meshquand-2D}
Let $\mesh$ be a conforming simplicial triangulation of a polyhedral bounded domain $\Omega \subset \mathbb{R}^2$ with $V$ vertices, $E$ edges, and $T$ triangles. 
Let $\meshA$ be the Alfeld split and $\meshP$ the Powell--Sabin split of $\mesh$. 
Then we have the following:
\begin{enumerate}[label=(\roman*)]
\item $\meshA$ has $V_A = V+T$ vertices, $E_A = E + 3T$ edges, and $T_A = 3 T$ triangles. 
\item $\meshP$ has $V_P = V+E + T$ vertices, $E_P = 2E + 6T$ edges, and $T_P = 6T$ triangles.  
\end{enumerate}
\end{lemma}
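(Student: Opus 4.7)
The plan is to prove both parts by direct enumeration, tracking which entities are newly created, which are subdivided, and which are inherited unchanged. No deep idea is needed; the only thing to watch is double-counting across neighboring triangles, and fortunately all new edges introduced by either split lie strictly inside a single triangle of $\mesh$, so no sharing across triangles of $\mesh$ occurs.

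For part (i), the Alfeld split adds one interior vertex per triangle (the barycenter) and does not identify or move any existing vertex, giving $V_A = V + T$. The three new edges in each triangle join the barycenter to the triangle's three vertices and lie in its interior, so $E_A = E + 3T$. These three edges partition each triangle into three sub-triangles, yielding $T_A = 3T$.

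For part (ii), the plan is to start from the Alfeld counts and add the contributions of the secondary refinement. By construction each original edge of $\mesh$ acquires exactly one new vertex (the crossing with the segment joining the two adjacent barycenters, or an analogous point on the boundary), so $V_P = V_A + E = V + E + T$; simultaneously each original edge is split into two, contributing $E$ extra edges. Inside each original triangle, three additional edges are drawn from the barycenter to the three new edge-vertices, contributing $3T$ further edges. Combined with $E_A = E + 3T$, this yields $E_P = 2E + 6T$. Finally, each of the three Alfeld sub-triangles in every original triangle is bisected by one of the new barycenter-to-edge-vertex edges, so $T_P = 2 \cdot 3T = 6T$.

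There is essentially no hard step here; it is pure bookkeeping. As a sanity check one can verify that the alternating sum $V - E + T$ is preserved when passing from $\mesh$ to $\meshA$ and to $\meshP$ (it must equal the Euler characteristic of $\overline{\Omega}$), which the claimed formulas indeed satisfy.
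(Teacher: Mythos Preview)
Your argument is correct and follows exactly the approach the paper indicates: direct enumeration of the mesh entities introduced by each split. The paper itself does not spell out the details beyond stating that the lemma ``is established by enumerating the mesh entities introduced by the split,'' so your write-up in fact supplies more than the paper does.
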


\subsection{Splits in 3D}
\label{sec:splithreed}

For a tetrahedron $\tet$ there are even more variants of barycentric subdivisions, some of which are the following. 

The \emph{Alfeld split}~\cite{alfeld2005c2,ref:zhang3DalfeldSplit,ref:GuzmaNeilanBarycntr} splits each tetrahedron in the mesh into $4$ tetrahedra. 
This is achieved by introducing $4$ new edges in any tetrahedron $\tet$, shown as gray dashed lines in Figure~\ref{fig:alfeldsplit} (left), that connect each vertex of $\tet$ to an interior
point, e.g., its barycenter. 
This creates one new vertex per tetrahedron. 
One defect of the Alfeld split is that it does not commute with a typical multigrid subdivision,
cf.~\cite{farrell2021reynolds}.
However, an optimal-order convergent, non-nested
multigrid method has been developed therein.

The \emph{Worsey--Farin split}~\cite{ref:cubicWorseyFarin} (also sometimes referred to as Clough--Tocher or Powell--Sabin~\cite{ref:quadraticPowellSabinTets,neilan2020stokes}) is a refinement of the Alfeld split and subdivides each of the $4$ subtetrahedra in the Alfeld split into $3$ subtetrahedra. 
Note that this is not the same as two successive Alfeld splits, which would result in $16$ subtetrahedra. 
Figure~\ref{fig:alfeldsplit} (right) depicts this further split, where the subtetrahedra of the Alfeld split are detached for the purpose of visualization. 
Each face of the original mesh is subdivided via a 2D Alfeld split resulting in one new vertex per face and $3$ new edges per face. 
The splitting point of the 2D Alfeld split of a face can be chosen as the point on the connecting line of the interior points of the two face neighboring tetrahedra, cf.~\cite{FGNZ.2022}. 
Then the face splitting point is connected to the interior point of the original simplex, resulting in $4$ new edges per tetrahedron in the Alfeld split. 
In this way, the original tetrahedron is divided into $12$ subtetrahedra.

Applying these splits to each tetrahedron in the triangulation $\mesh$ leads to new triangulation, denoted by $\meshA$ for the Alfeld split
and by $\meshW$ for the Worsey--Farin split. 
This creates on each face of the original mesh three singular edges, that is edges whose adjacent faces are lying (only) in two planes~\cite[Def.~4.1]{FGNZ.2022}. 
Singular edges in 3D are the direct generalization of singular vertices in 2D.

Alternatively, we may consider a barycentric subdivision of each $\tet$ into $24$ subtetrahedra.
Since this subdivision is a natural generalization of the Powell--Sabin split to three dimensions, it has been referred to as a generalized Powell--Sabin split 
\cite{ref:worseypipernotfarin}.

\begin{figure}[ht!]
\centering
\begin{tikzpicture}[scale=2.8]
	
	\begin{scope}
		\coordinate (T0) at (0,0);
		\coordinate (T1) at (1,0);
		\coordinate (T2) at (1.5,0.3);
		\coordinate (T3) at (1,1);
		\coordinate (M) at (0.876,0.325);
		\coordinate (F3) at (0.835,0.1);
		\coordinate (F2) at (0.666,0.333);
		\coordinate (F0) at (1.1664,0.434);
		\coordinate (F1) at (0.834,0.4338);
		\draw[dotted] (T0) -- (T2);
		% blue
		\fill[gray] (M) circle (0.4pt);
		\draw[gray,dashed] (M) -- (T0);
		\draw[gray,dashed] (M) -- (T1);
		\draw[gray,dashed] (M) -- (T2);
		\draw[gray,dashed] (M) -- (T3);
		
		\draw (T0) -- (T1) -- (T3)--(T0);
		\draw (T1) -- (T2) -- (T3);
		\fill (T0) circle (0.4pt);
		\fill (T1) circle (0.4pt);
		\fill (T2) circle (0.4pt);
		\fill (T3) circle (0.4pt);
	\end{scope}
	
	\begin{scope}[xshift = 2.1cm]
		
		% 0 - 2 - 3 (back)
		\begin{scope}[xshift = 0cm, yshift = 0.3cm]
			\coordinate (T0) at (0,0);
			\coordinate (T1) at (1,0);
			\coordinate (T2) at (1.5,0.3);
			\coordinate (T3) at (1,1);
			\coordinate (M) at (0.876,0.325);
			\coordinate (F3) at (0.835,0.1);
			\coordinate (F2) at (0.666,0.333);
			\coordinate (F0) at (1.1664,0.434);
			\coordinate (F1) at (0.834,0.4338);
			% red
			\draw[gray,dashed] (T0) -- (F1);
			\draw[gray,dashed] (T2) -- (F1);
			\draw[gray,dashed] (T3) -- (F1);
			\draw[gray,dashed] (M) -- (F1);
			\fill[gray] (F1) circle (0.4pt);
			% blue 
			\fill (M) circle (0.4pt);
			\draw (M) -- (T0);
			\draw (M) -- (T2);
			\draw (M) -- (T3); 
			
			\fill (T0) circle (0.4pt);
			\fill (T2) circle (0.4pt);
			\fill (T3) circle (0.4pt);
			\draw (T0) -- (T2) -- (T3) -- (T0);
			
		\end{scope}
		
		% 0 - 1 - 2 (base)
		\coordinate (T0) at (0,0);
		\coordinate (T1) at (1,0);
		\coordinate (T2) at (1.5,0.3);
		\coordinate (T3) at (1,1);
		\coordinate (M) at (0.876,0.325);
		\coordinate (F3) at (0.835,0.1);
		\fill[white] (T0) -- (T1) -- (T2);  
		\draw[dotted] (0,0) -- (1.5,0.3);     
		%red 
		\draw[gray,dashed] (T0) -- (F3);
		\draw[gray,dashed] (F3) -- (T2);
		\draw[gray,dashed] (F3) -- (M);
		\draw[gray,dashed] (F3) -- (T1);
		\fill[gray] (F3) circle (0.4pt);
		% blue
		\fill (M) circle (0.4pt);
		\draw (M) -- (T0);
		\draw (M) -- (T1);
		\draw (M) -- (T2);
		\draw (T0) -- (T1) -- (T2);
		
		\fill (T0) circle (0.4pt);
		\fill (T1) circle (0.4pt);
		\fill (T2) circle (0.4pt);
		
		% 0 - 1 - 3 (left)
		\begin{scope}[xshift = -0.3cm, yshift = 0.2cm]
			\coordinate (T0) at (0,0);
			\coordinate (T1) at (1,0);
			\coordinate (T2) at (1.5,0.3);
			\coordinate (T3) at (1,1);
			\coordinate (M) at (0.876,0.325);
			\coordinate (F3) at (0.835,0.1);
			\coordinate (F2) at (0.666,0.333);
			\fill[white] (T0) -- (T1) -- (T3);  
			% red
			\fill[gray] (F2) circle (0.4pt);
			% blue
			\draw[dotted] (M) -- (T0);
			\draw[dotted] (M) -- (T1);
			\draw[dotted] (M) -- (T3);
			\fill (M) circle (0.4pt);
			% red
			\draw[gray,dashed] (M) -- (F2);
			\draw[gray,dashed] (T0) -- (F2);
			\draw[gray,dashed] (T1) -- (F2);
			\draw[gray,dashed] (T3) -- (F2);
			
			\draw (T0) -- (T1) -- (T3)--(T0);  	
			\fill (T0) circle (0.4pt);
			\fill (T1) circle (0.4pt);
			\fill (T3) circle (0.4pt);
			
		\end{scope}
		
		% 1 - 2 - 3 (right)
		\begin{scope}[xshift = 0.35cm, yshift = 0.1cm]
			\coordinate (T0) at (0,0);
			\coordinate (T1) at (1,0);
			\coordinate (T2) at (1.5,0.3);
			\coordinate (T3) at (1,1);
			\coordinate (M) at (0.876,0.325);
			\coordinate (F3) at (0.835,0.1);
			\coordinate (F2) at (0.666,0.333);
			\coordinate (F0) at (1.1664,0.434);
			\coordinate (F1) at (0.834,0.4338);
			\fill[white] (T1) -- (T2) -- (T3) -- (M) -- (T1);  
			% blue
			\fill (M) circle (0.4pt);
			\draw (M) -- (T1);
			\draw[dotted] (M) -- (T2);
			\draw (M) -- (T3);
			% red
			\draw[gray,dashed] (T1) -- (F0);
			\draw[gray,dashed] (T2) -- (F0);
			\draw[gray,dashed] (T3) -- (F0);
			\draw[gray,dashed] (M) -- (F0);
			\fill[gray] (F0) circle (0.4pt);
			
			\fill (T1) circle (0.4pt);
			\fill (T2) circle (0.4pt);
			\fill (T3) circle (0.4pt);
			\draw (T1) -- (T2) -- (T3) -- (T1);
		\end{scope}
	\end{scope}
\end{tikzpicture}
\caption{Alfeld split into $4$ new tetrahedra (left) and  Worsey–Farin split into $3$ new tetrahedra each (right) in 3D with the new edges (dashed) and new vertices in gray. For simpler visualization the $4$ tetrahedra resulting from the Alfeld split are detached in the right-hand figure. 
}
      \label{fig:alfeldsplit}
\end{figure}
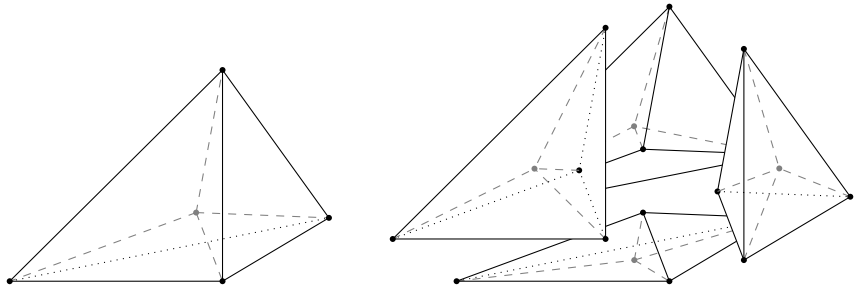
As above the following lemma is established by enumerating the mesh entities introduced by the split. 

\begin{lemma}[mesh quantities for 3D split meshes]\label{lem:split-meshes}
Let $\mesh$ be a conforming simplicial triangulation of a polyhedral bounded domain $\Omega \subset \mathbb{R}^3$ with $V$ vertices, $E$ edges, $F$ faces, and $T$ tetrahedra. 
Let $\meshA$ be the Alfeld split and $\meshW$ the Worsey--Farin split of $\mesh$.
\begin{enumerate}[label=(\roman*)]
\item \label{itm:split-A} $\meshA$ has $V_A = V+T$ vertices, $E_A = E + 4T$ edges, $F_A =F  + 6T$ faces, and $T_A = 4T$ tetrahedra. 
\item \label{itm:split-WF} $\meshW$ has $V_W = V+F+T$ vertices, $E_W = E + 8T + 3F$ edges,  $F_W = 3F + 18 T$ faces, and $T_W = 12T$ tetrahedra. 
\end{enumerate}
\end{lemma}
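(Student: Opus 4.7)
My plan is to enumerate the simplices introduced by each split locally---counting entities interior to an original tetrahedron on a per-tetrahedron basis and entities that live on an original face on a per-face basis---so that no double counting occurs across faces shared between two tetrahedra.

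For part~\ref{itm:split-A}, every new simplex of $\meshA$ lies strictly inside a single original tetrahedron $\tet$, so the count is purely local. Each $\tet$ contributes one interior point $m_\tet$ as a new vertex, and every other new simplex inside $\tet$ is the join of $m_\tet$ with a proper subface of $\tet$. Enumerating the subfaces by dimension (four vertices, six edges, four faces) yields $4$ new edges, $6$ new triangles, and $4$ new tetrahedra per $\tet$. Summing over $\mesh$ and combining with the unchanged entities of $\mesh$ gives the stated formulas.

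For part~\ref{itm:split-WF}, since $\meshW$ refines $\meshA$, I would count the additional entities introduced beyond $\meshA$. A face barycenter $m_f$ is added on each original face $f$, contributing $F$ new vertices and yielding $V_W = V + T + F$. The new edges split into two groups: the three edges per original face from $m_f$ to the vertices of $f$, contributing $3F$; and the four edges per tetrahedron $\tet$ joining $m_\tet$ to each $m_f$ with $f\subset\partial\tet$, contributing $4T$. Adding these to $E_A = E + 4T$ gives $E_W = E + 8T + 3F$. For tetrahedra, within each $\tet$ the split takes, for every one of the four faces of $\tet$, the three sub-triangles produced by the 2D Alfeld split of that face and joins each to $m_\tet$, producing $12$ sub-tetrahedra per $\tet$, hence $T_W = 12T$. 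For faces, the boundary sub-triangles contribute $3F$; for the interior faces inside a fixed $\tet$, each of the $12$ sub-tetrahedra has one face on $\partial\tet$ and three interior faces, and each interior face is shared by exactly two sub-tets, yielding $12\cdot 3/2 = 18$ interior faces per $\tet$. Altogether $F_W = 3F + 18T$.

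The main obstacle is bookkeeping rather than depth: the face-associated new vertices and edges sit on the shared boundary between two original tetrahedra and must be counted exactly once, and the interior-face count inside $\tet$ must be handled consistently across the $12$ sub-tetrahedra. A useful independent check is that any refinement preserves the Euler characteristic $V-E+F-T$; direct substitution of the formulas in both~\ref{itm:split-A} and~\ref{itm:split-WF} confirms this.
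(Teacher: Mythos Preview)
Your proposal is correct and follows exactly the approach the paper indicates: the paper states only that the lemma ``is established by enumerating the mesh entities introduced by the split,'' and your careful per-tetrahedron and per-face enumeration is precisely that. Your added Euler-characteristic check is a nice sanity check beyond what the paper provides.
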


\section{Mesh quantities in 3D}
\label{sec:meshquantities}

Using counting arguments we want to express all mesh quantities in just a few quantifiable 
parameters, the values of which we intend to investigate and estimate.
In the following section we use those insights to compare the dimensions of mixed finite element spaces.  

The starting point are the well-known Euler characteristics 
both for the closed domain $\overline{\Omega}$ as well as for its boundary $\partial\Omega$, cf.~\cite{E.1758,H.2002}. 
Let $V$ be the number of vertices, $E$ the number of edges,  $F$ the number of faces, and $T$ the number of tetrahedra in $\mesh$. Furthermore, we denote the corresponding number of mesh quantities on the boundary by $V_b, E_b$, and $F_b$, respectively. 
Then, there exist constants $\chi, \chi_b \in \mathbb{Z}$ such that
\begin{align}\label{eq:Euler}
V - E + F - T = \chi \qquad \text{ and } \qquad 
V_b - E_b+ F_b  = \chi_b. 
\end{align}
The constants $\chi, \chi_b \in \mathbb{Z}$ are topological invariants that only depend on the genus of the domain $\overline{\Omega}$. 
For example, for $\overline{\Omega}$ simply connected one has $\chi = 1$ and $\chi_{b} = 2$. 
This can be shown by induction starting from a single tetrahedron.

Additionally, some simple combinatorial arguments show that 
\begin{align}\label{eq:topocef}
2F-F_b=4T  \qquad \text{ and } \qquad  3F_b=2 E_b. 
\end{align}
 The proof follows similarly as in the 2D case~\cite{lrsBIBaf}.
Imagine placing $2$ marbles on each interior face and one on each boundary face.
Now move the marbles into the neighboring tetrahedra.
Thus each tetrahedron will have $4$ marbles, since it has $4$ faces.
In total we have allocated $2F-F_b$ marbles to tetrahedra, and we end up with $4T$.
By the conservation of marbles, the first equality in \eqref{eq:topocef}
is proved.
The proof of the second identity follows analogously.
We put $3$ marbles on each boundary face, then move them to the boundary edges.
After the transfer each boundary edge has $2 $ marbles. 

\begin{remark}[2D case]\label{rmk:avg-2d}
In 2D one can express all mesh quantities $V,E,T$ ($T$ denotes the number of triangles) and $V_b, E_b$ in terms of only $V$ and $V_b$. 
Indeed, the Euler characteristics are 
\begin{align}\label{eq:Euler-2d}
V - E + T = \chi \qquad  \text{ and } \qquad 
V_b - E_b = \chi_b,
\end{align}
and the combinatorial identity 
\begin{align}\label{eq:topocef-2d}
%V_b =E_b,\qquad 
3 T =2 E - E_b. 
\end{align}
suffices. From those it follows that
\begin{alignat*}{2}
E &= 3V - V_b - 3 \chi + \chi_b, \qquad  \quad E_b &= V_b - \chi_{b},\\
T &= 2V - V_b - 2 \chi + \chi_b.  
\end{alignat*}
For a simply connected domain we further have that $\chi_b = 0$ and $\chi = 1$. 
\end{remark}

In 3D the identities~\eqref{eq:Euler} and~\eqref{eq:topocef} are not enough to express all mesh quantities in the number of vertices $V$ and $V_b$. In this case we need to choose an additional parameter. 
 
\subsection{Average number of edges}

Additionally to $V,V_b$ we consider the average number of edges intersecting in a vertex. 
In principle we could use alternative (average) quantities, cf.~Remark~\ref{rmk:altern} below, but we shall see in the sequel that this choice has some advantages. 

Let $e_i$ denote the number of edges emanating from vertex $v_i$ in $\mesh$, for $i=1,\ldots,V$. 
Then we define the arithmetic mean
\begin{align}\label{eqn:ebardef}
\ebar \coloneqq \frac1V\sum_{i=1}^V e_i = \frac{2E}{V},
\end{align}
where the second identity holds since summing $e_i$ over all $i$ counts every edge twice.
Thus $\ebar$ is the average number of edges emanating from vertices in a mesh $\mesh$. 

\begin{proposition}\label{pro:3d-quant}
Let $\mesh$ be a conforming simplicial triangulation with $V$ vertices, $E$ edges, $F$ faces, and $T$ tetrahedra of a domain $\Omega \subset \mathbb{R}^3$ as above with topological constants $\chi,\chi_b$, cf.~\eqref{eq:Euler}. 
Then, the mesh quantities can be expressed in terms of $V, V_b$, and $\ebar$, as
\begin{alignat*}{2}
	E & = \tfrac{1}{2}\ebar V, \qquad\qquad &E_b &= 3(V_b - \chi_b), \\
	F & = \left(\ebar - 2 \right)V  - V_b + 2\chi +  \chi_b, \qquad \quad &F_b&= 2(V_b - \chi_b),\\
	T & = \left(\tfrac{1}{2} \ebar - 1 \right)V  -  V_b + \chi + \chi_b.   
\end{alignat*}
\end{proposition}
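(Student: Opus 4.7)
The plan is to derive the five target identities purely by algebra from the five relations already stated in the text, namely the two Euler identities in~\eqref{eq:Euler}, the two combinatorial identities in~\eqref{eq:topocef}, and the definition~\eqref{eqn:ebardef} of $\ebar$. Since we have five linear relations in the five unknowns $E, F, T, E_b, F_b$ (with $V, V_b, \ebar, \chi, \chi_b$ playing the role of data), the system is expected to be uniquely solvable, and the work is just bookkeeping.

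First I would read off $E = \tfrac{1}{2} \ebar V$ immediately from~\eqref{eqn:ebardef}. Next I would decouple the boundary quantities: combining $3F_b = 2 E_b$ with the boundary Euler identity $V_b - E_b + F_b = \chi_b$ eliminates $F_b$ and yields $V_b - \tfrac{1}{3} E_b = \chi_b$, hence $E_b = 3(V_b - \chi_b)$ and then $F_b = 2(V_b - \chi_b)$.

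With the boundary quantities in hand, I would treat $F$ and $T$ as the remaining unknowns governed by the two interior relations $2F - F_b = 4T$ and $V - E + F - T = \chi$. Solving the first for $T = \tfrac{1}{2} F - \tfrac{1}{4} F_b$ and substituting into the Euler identity gives $\tfrac{1}{2} F = \chi - V + E + \tfrac{1}{4} F_b$, so that
\begin{equation*}
F = 2\chi - 2V + 2E + \tfrac{1}{2} F_b.
\end{equation*}
Inserting the already-computed values of $E$ and $F_b$ produces $F = (\ebar - 2) V - V_b + 2\chi + \chi_b$, and then back-substitution into $T = \tfrac{1}{2} F - \tfrac{1}{4} F_b$ gives $T = (\tfrac{1}{2} \ebar - 1) V - V_b + \chi + \chi_b$.

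There is no real obstacle here; the only thing to be careful about is keeping the signs of $\chi$ and $\chi_b$ straight when the boundary quantities are plugged into the interior relations, since $\chi_b$ appears in both the formula for $F$ and (via $F_b$) in the expression for $T$, and these contributions must combine correctly to give the stated coefficient $+\chi_b$ in the formula for $T$.
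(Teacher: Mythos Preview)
Your approach is correct and essentially identical to the paper's: derive $E$ from the definition of $\ebar$, solve the boundary pair $(E_b,F_b)$ from $3F_b=2E_b$ and the boundary Euler relation, and then solve the remaining $2\times 2$ linear system in $F,T$ coming from $2F-F_b=4T$ and the interior Euler identity. One slip to fix in your writeup: substituting $T=\tfrac{1}{2}F-\tfrac{1}{4}F_b$ into $V-E+F-T=\chi$ gives $\tfrac{1}{2}F=\chi-V+E-\tfrac{1}{4}F_b$ (not $+\tfrac{1}{4}F_b$), hence $F=2\chi-2V+2E-\tfrac{1}{2}F_b$, which then indeed yields the stated formula for $F$.
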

\begin{proof}
The expression for $E$ follows by the definition of $\ebar$. 
From~\eqref{eq:topocef} we have that $F_b = \tfrac{2}{3}E_b$ and applying it in the equation for the boundary quantities in~\eqref{eq:Euler} yields
\begin{align*}
E_b = 3(V_b - \chi_b) \quad \text{ and } \quad 
F_b =  2(V_b - \chi_b). 
\end{align*}
Inserting the expression for $F_b$ in the first identity in~\eqref{eq:topocef} and using the expression of $\ebar$ in the first identity in~\eqref{eq:Euler} leads to 
\begin{align*}
	F - 2T &= V_b - \chi_b,\\
	F - T &= \chi - V + \tfrac{1}{2} \ebar V. 
\end{align*}
Solving the system of equations for $F$ and $T$ proves the identities as stated. 
\end{proof}

\begin{remark}\label{rmk:3d-approx-quant}
If for a fixed domain $\Omega \subset \mathbb{R}^3$ the mesh quantities in the mesh $\mesh$ are sufficiently large one may neglect the asymptotically smaller quantities $\chi, \chi_b$, and $V_b$. Then, one arrives at 
\begin{align}\label{eq:approx-quant}
E  = \tfrac{1}{2}\ebar V, \qquad 
F  \approx (\ebar-2)V, \qquad \text{ and } 
\quad
T  \approx \left(\tfrac{1}{2} \ebar-1\right)V.
\end{align}
We use $\approx$ for families of meshes for which $T$, $F$, $E$, and $V$ all increase without bound, and the ratio $V_b/V$ converges to zero.
However, the asymptotics in most cases are just a guide for the reader
to avoid carrying more terms that can confuse the issue. 
Note that one can easily trace
the estimates to obtain exact equalities with the terms restored that have been
eliminated as being lower order. 
\end{remark}

Before investigating general bounds on $e_i$ and $\ebar$ let us consider the values for special  meshes. 

\begin{example}[star meshes]\label{ex:small-mesh} 
We consider a simply connected open $\Omega\subset \mathbb{R}^3$ and meshes with a single interior vertex $v_1$. 
In this situation $\partial \Omega$ is a polyhedral surface topologically equivalent to  a 2-sphere $(\chi = 1, \chi_b = 2)$ and the one interior vertex is connected to each vertex on the boundary, i.e., one has that 
\begin{align*}
	V_b = e_1,  \qquad V = V_b + 1 \quad \text{ and } \quad E = E_b + V_b.  
\end{align*} 
These triangulations can be viewed as the star of an interior vertex for
general triangulations, and quantities associated with them will be relevant
in subsequent derivations. 

The simplest example is the domain and mesh $\mesh$ arising from the Alfeld split of one single tetrahedron. 
In this case we have that $V_b = 4 = e_1$, $V = 5$, and $E = 10$. 

This is indeed the smallest such mesh in the sense that  general star meshes of the type as described satisfy $e_1 = V_b \geq 4$, and $E\geq 10$ as we shall show now.
Using the fact that each boundary vertex has at least $3$ adjacent boundary edges and there is at least one boundary vertex it follows that $F_b \geq 3$. 
From the expressions in Proposition~\ref{pro:3d-quant} we have that  $F_b=2(V_b-2)$ and $E_b=3(V_b-2)$, since $\chi_b = 2$.  
Hence, it follows that $V_b=F_b/2 +2>3$ and thus $V_b \geq 4$, since $V_b \in \mathbb{N}$.  
Then it also follows that $E_b=3(V_b-2)\geq 6$ and that $E =  E_b + V_b \geq 10$.

For any given polyhedral domain $\Omega$ with positive measure, the number $e_1 = V_b$ can be
arbitrarily large for suitably chosen meshes. 
Note however, that for increasing $V_b$ the shape-regularity of the mesh deteriorates. 
Indeed, $e_1$ can be bounded above by the shape-regularity constant or minimum solid angle, as stated in Lemma~\ref{lem:upper-bd} below. 

However, $\ebar$ cannot be arbitrarily large, since we take the mean over all vertices. 
Recalling that $ V = V_b + 1$, $E = E_b + V_b$, and that $E_b = 3(V_b - 2)$, we obtain 
\begin{align*}
\overline{e} = \frac{2E}{V} = 
%2 (E_b + V_b) = 2(3(V_b - 2) + V_b)  = 8 V_b - 12 
\frac{8V_b - 12}{V_b + 1}. 
\end{align*}
This expression is strictly growing in $V_b$ and bounded above by $8$. 
Since we have $V_b \geq 4$ by the above arguments, we find that 
\begin{align*}
4 \leq  \overline{e} \leq 8.
\end{align*}
The boundary vertices with fewer edges emanating from them are responsible for $\overline{e}$ being bounded. 
\end{example}

\begin{example}[regular meshes]\label{ex:freudenthal}
\begin{figure}
\centering
     \begin{subfigure}[b]{0.48\textwidth}
         \centering
         \begin{tikzpicture}[scale=2.8]
	
	\begin{scope}
		
		\coordinate (T0) at (0,0);
		\coordinate (T1) at (1,0);
		\coordinate (T2) at (1.5,0.2);
		\coordinate (T3) at (1.5,1.2);
		\coordinate (T4) at (0.5,1.2);
		\coordinate (T5) at (0,1);
		\coordinate (T6) at (1,1);
		\coordinate (T7) at (0.5,0.2);

		\draw (T0) -- (T1) -- (T2) -- (T3) -- (T4) -- (T5) -- (T0);
		\draw[dashed] (T0) -- (T7) -- (T4);
		\draw[dashed] (T7) -- (T2);
		%back edges
		\draw[gray,dashed] (T0) -- (T4);   
		\draw[gray,dashed] (T0) -- (T2);   
		\draw[gray,dashed] (T7) -- (T3); 
		\draw[gray,dashed] (T0) -- (T3);

		\draw  (T1) -- (T6) -- (T3);
		\draw  (T5) -- (T6) ;
		
		%fr'ont edges
		\draw[gray] (T0) -- (T6);   
		\draw[gray] (T1) -- (T3);   
		\draw[gray] (T5) -- (T3);   
		
	\end{scope}
	
\end{tikzpicture}
         \caption{Kuhn triangulation of a single cube into 6 tetrahedra~\cite{K.1960}.}
         \label{fig:Kuhncube}
     \end{subfigure}
     \hfill
          \begin{subfigure}[b]{0.48\textwidth}
         \centering

\begin{tikzpicture}[scale=1.8]
	
	\begin{scope}
		
		\coordinate (T0) at (0,0);
		\coordinate (T1) at (1,0);
		\coordinate (T2) at (1.5,0.2);
		\coordinate (T3) at (1.5,1.2);
		\coordinate (T4) at (0.5,1.2);
		\coordinate (T5) at (0,1);
		\coordinate (T6) at (1,1);
		\coordinate (T7) at (0.5,0.2);
		
		\draw (T0) -- (T1) -- (T2) -- (T3) -- (T4) -- (T5) -- (T0);
		\draw[dashed] (T0) -- (T7) -- (T4);
		\draw[dashed] (T7) -- (T2);
		%back edges
		\draw[dashed] (T0) -- (T4);   
		\draw[dashed] (T0) -- (T2);   
		\draw[gray,dashed] (T7) -- (T3); 
		\draw[gray,dashed] (T0) -- (T3);

		\draw  (T1) -- (T6) -- (T3);
		\draw  (T5) -- (T6) ;
		
		%fr'ont edges
		\draw (T0) -- (T6);   
		\draw[gray] (T1) -- (T3);   
		\draw[gray] (T5) -- (T3);   
		
		\fill[gray] (T3) circle (0.4pt);
		
		\draw[gray] (T3) -- (T2);   
		\draw[gray] (T3) -- (T6);   
		\draw[gray] (T3) -- (T4);   
		
	\end{scope}

	\begin{scope}[xshift = 1.5cm,yshift = 1.2cm]
		
		\coordinate (T0) at (0,0);
		\coordinate (T1) at (1,0);
		\coordinate (T2) at (1.5,0.2);
		\coordinate (T3) at (1.5,1.2); 
		\coordinate (T4) at (0.5,1.2);
		\coordinate (T5) at (0,1);
		\coordinate (T6) at (1,1);
		\coordinate (T7) at (0.5,0.2);
		
		\draw (T0) -- (T1) -- (T2) -- (T3) -- (T4) -- (T5) -- (T0);
		\draw[dashed] (T0) -- (T7) -- (T4);
		\draw[dashed] (T7) -- (T2);
		%back edges
		\draw[gray,dashed] (T0) -- (T4);   
		\draw[gray,dashed] (T0) -- (T2);   
		\draw[dashed] (T7) -- (T3); 
		\draw[gray,dashed] (T0) -- (T3);

		\draw  (T1) -- (T6) -- (T3);
		\draw  (T5) -- (T6) ;
		
		%fr'ont edges
		\draw[gray] (T0) -- (T6);   
		\draw (T1) -- (T3);   
		\draw (T5) -- (T3);   
		
		\fill[gray] (T0) circle (0.4pt);
		
		\draw[gray] (T0) -- (T1);   
		\draw[gray] (T0) -- (T6);   
		\draw[gray] (T0) -- (T5);   
		
	\end{scope}
	
	\begin{scope}[xshift = 1.6cm, yshift = -0.1cm]
		
		\coordinate (T0) at (0,0);
		\coordinate (T1) at (1,0);
		\coordinate (T2) at (1.5,0.2);
		\coordinate (T3) at (1.5,1.2);
		\coordinate (T4) at (0.5,1.2);
		\coordinate (T5) at (0,1);
		\coordinate (T6) at (1,1);
		\coordinate (T7) at (0.5,0.2);

		\draw (T0) -- (T1) -- (T2) -- (T3) -- (T4) -- (T5) -- (T0);
		\draw[dashed] (T0) -- (T7) -- (T4);
		\draw[dashed] (T7) -- (T2);
		%back edges
		\draw[dashed] (T0) -- (T4);   
		\draw[dashed] (T0) -- (T2);   
		\draw[dashed] (T7) -- (T3); 
		\draw[dashed] (T0) -- (T3);

		\draw  (T1) -- (T6) -- (T3);
		\draw  (T5) -- (T6) ;
		
		%fr'ont edges
		\draw (T0) -- (T6);   
		\draw (T1) -- (T3);   
		\draw (T5) -- (T3);   
		
	\end{scope}
	
\end{tikzpicture}
         \caption{Two Kuhn cubes sharing a vertex, together with a third Kuhn cube ready
to be attached.}
         \label{fig:Kuhntria}
     \end{subfigure}
     \label{fig:kuhnt}
     \caption{Regular triangulation of a cube and combination of those to form the triangulation of a rectangular domain.}
\end{figure}
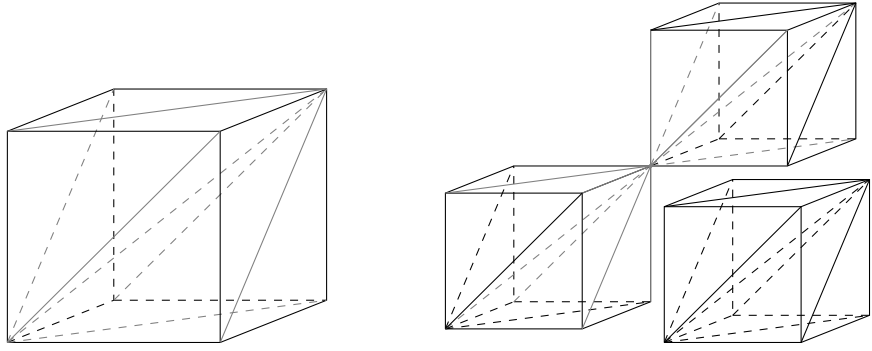
     
     The so-called Kuhn triangulation~\cite{K.1960,ref:Freudenthalulation}
    is a regular triangulation of the unit cube in $\mathbb{R}^d$. For dimension $d = 3$ it consists of $6$ tetrahedra. 
 In this case it has one interior edge and, for each vertex, it has 3 axis-parallel edges
and 3 edges lying on faces, as indicated in Figure~\ref{fig:Kuhncube}.
Thus there are 7 edges impinging on each of the vertices at the lower-left and
upper-right of the cube. 

We refer to a triangulation consisting of Kuhn cubes as \emph{Freudenthal triangulation}, cf.~\cite{F.1942, ref:Freudenthalulation}. 

\begin{enumerate}[label=(\alph*)]
\item For an infinite mesh of the whole of $\mathbb{R}^3$ arising by filling the space with Kuhn cubes for every vertex, one has that $e_i = 14$. 
Indeed, in Figure~\ref{fig:Kuhntria} we see two Kuhn cubes sharing a vertex, with a third cube in position to be added to the complex. 
In adding the new cube, no new edges are added, and this holds for all six positions where a cube is added to form a space filling triangulation. 
All edges of the shared vertex are contained already in the two diagonally opposite Kuhn cubes and thus we have that $e_i = 2 \cdot 7 = 14$, for all $i$. 
\end{enumerate}
The quantity $\ebar$ is defined only for finite meshes, so we now consider
finite subsets of the Freudenthal triangulation based on Kuhn cubes.
We can then talk about a limiting value for $\ebar$ as the number of cubes
tends to infinity.
\begin{enumerate}[label=(\alph*),resume]
\item For a bounded rectangular domain $\Omega \subset \mathbb{R}^3$ and a regular mesh thereof consisting of $n \cdot m \cdot \ell$ Kuhn cubes we have $e_i = 14$ only for interior vertices. 
The boundary vertices have fewer adjacent edges, and hence one has that $\ebar < 14$. 
For $m, n, \ell \to \infty$ it follows that $\ebar \to 14$, since the number of boundary edges and vertices are asymptotically smaller than the number of interior edges and vertices. 
\end{enumerate} 
These arguments can easily be generalized to higher dimension $d$, where a Freudenthal triangulation consisting of $N^d$ Kuhn cubes has order $N^d$ vertices and order  $(2^d-1)N^d$ edges. 
Hence the corresponding value for $\ebar$ is $2(2^d-1)$ in $d$ dimensions cf.~\cite[Sec.~3.1]{DST.2022}. 
\end{example}

In view of the following remark one could chose any other average quantity instead of $\ebar$ as defined in~\eqref{eqn:ebardef} to fully determine all other parameters.
However, we prefer to work with the average mesh quantity determined by the lowest dimensional quantities, i.e., number of vertices and number of edges. 
In terms of data structure they can easily be extracted. 

\begin{remark}[alternative quantities]\label{rmk:altern}
	For an interior vertex $v_i$ in $\mesh$ we can relate the number $e_i$ of edges emanating from $v_i$ to the number $t_i$ of simplices intersecting in $v_i$ and the number of faces $f_i$ intersecting in $v_i$. 
	For this it suffices to consider the neighboring patch, the star of the vertex $v_i$ in $\mesh$, defined by
	\begin{align}\label{def:star}
		\omega(v_i) \coloneqq \{\tet \in \mesh \colon v_i \in \tet\}. 
	\end{align} 
Then $e_i$ is the number of boundary vertices of $\partial \omega(v_i)$
and $f_i, t_i$ are the number of the respective boundary quantities 
(edges and triangles) of $\partial \omega(v_i)$.
With the same arguments as in Proposition~\ref{pro:3d-quant} and Example~\ref{ex:small-mesh} we find that 
	\begin{align}\label{eq:tetperv}
		f_i =  3(e_i - 2)  \quad  \text{ and } \quad 
		t_i = 2(e_i - 2), 
	\end{align}
	for any interior vertex $v_i$. 
	For the global average quantities we obtain
\begin{align}
		\overline{f} \coloneqq \frac{1}{V} \sum_{i = 1}^V f_i = \frac{3F}{V}  \quad  \text{ and } \quad 
		\overline{t} \coloneqq \frac{1}{V} \sum_{i = 1}^V t_i = \frac{4T}{V}. 
	\end{align}
	With Proposition~\ref{pro:3d-quant} neglecting the boundary quantities and both $\chi$ and $\chi_b$ for large meshes, we find that 
	\begin{align}\label{eq:10}
		\overline{f} \approx 3 (\overline{e} - 2)  \quad  \text{ and } \quad 
		\overline{t} \approx 2 (\overline{e} - 2). 
	\end{align}

	Also the remaining average quantities can be recovered. 
	Denoting by $\theta_j$ the number of tetrahedra intersecting in edge $\edgnam_j$ and 
by $\phi_j$ the number of faces intersecting in edge  $\edgnam_j$, 
for $j = 1, \ldots, E$, we may define the average quantities 
	\begin{align}
		\overline{\phi} \coloneqq \frac{1}{E} \sum_{j = 1}^E \phi_j = \frac{3F}{E}  \quad  \text{ and } \quad 
		\overline{\theta} \coloneqq \frac{1}{E} \sum_{j = 1}^E \theta_j = \frac{6T}{E}. 
	\end{align}
They can be recovered from $\ebar$ by 
	\begin{align}\label{eq:12}
		\overline{\phi} = 2 \frac{\overline{f}}{\overline{e}}  \approx 6 \frac{(\ebar-2)}{\ebar}  \quad  \text{ and } \quad 
		\overline{\theta} = 3 \frac{\overline{t}}{\overline{e}} \approx 6 \frac{(\ebar-2)}{\ebar}, 
	\end{align}
	where the approximate identity refers again to large meshes where we may neglect the boundary quantities, and both $\chi$ and $\chi_b$. 
See also~\cite[Thm.~3.3]{PR.2005}. 
\end{remark}

\begin{example}
For the regular mesh in Example~\ref{ex:freudenthal}, from $\ebar = 14$ the remaining average
quantities can be deduced using Remark~\ref{rmk:altern} since the approximations in~\eqref{eq:10} and~\eqref{eq:12} become exact on an infinite mesh, as summarized in Table~\ref{tbl:reg}.

	\begin{table}[ht!] \small
	\begin{TAB}(r)[5pt]{|c|c|c|c|c|}{|c|c|}
		$\ebar$ & $\overline{f}$ & $\overline{t}$ & $\overline{\phi}$ & $\overline{\theta}$ 
		\\
		$14$ &  $36$ &   $24$ &   $\frac{36}{7}$ &   $\frac{36}{7}$  	
	\end{TAB}
	\caption{ Average mesh quantities for the regular mesh as in Example~\ref{ex:freudenthal}.  
	}
	\label{tbl:reg}
\end{table}
\end{example}

\begin{remark}[Average quantities in 2D]\label{rmk:2d-av-quant}
In 2D the quantities $\ebar, \tbar$ are bounded as
\begin{align*}
	2 \leq \ebar \leq 6 \qquad \text{ and } \qquad 1 \leq \tbar \leq 6, 
\end{align*}  
and for any sequence of meshes $(\mesh_n)_{n \in \mathbb{N}}$ of a given polyhedral domain $\Omega$ with positive measure 
$\Omega \subset \mathbb{R}^2$ with $V_n \to \infty$, as $n \to \infty$ one has that 
\begin{align*}
	\ebar_n \to 6 \quad \text{ and }\quad \tbar_n \to 6, \qquad \text{ as } n \to \infty.  	
\end{align*}
This is a direct consequence of Euler characteristics~\eqref{eq:Euler-2d}, since  
\begin{align*}
	\ebar  = \frac{2E}{V} = 6 - \frac{ 2V_b}{V} -  \frac{6 \chi}{V} + \frac{ 2 \chi_b}{V},\qquad
	\tbar  = \frac{3T}{V} = \frac{2E-E_b}{V}=\ebar-\frac{E_b}{V},
\end{align*}
which holds for any triangulation in the plane, cf.~Remark~\ref{rmk:avg-2d}. 
In the formulation for simple planar graphs this result can be found, e.g., in~\cite[Thm.~6.1.23]{W.1996}. 
Note that the sequence $(\mesh_n)_{n \in \mathbb{N}}$ may arise as an arbitrary refinement of an initial mesh $\mesh_0$ and no assumption of shape-regularity or specific refinement scheme enters here. 
		
See also~\cite[Thm.~5.1]{PR.2002}, which states this for the special case of so-called skeleton-regular refinements. 
\end{remark}

In 3D to obtain bounds on the average quantities and their asymptotic behavior is much more challenging.  
In fact, different limiting values of $\ebar$ may occur depending on the mesh refinement scheme, cf.~\cite[Tab.~3]{PR.2005} and the investigation below. 

\subsection{Lower bounds}
Let us first consider lower bounds for $e_i$ and $\ebar$. 
Obviously, each vertex $e_i$ is contained in at least one tetrahedron, and hence $e_i \geq 3$. 
This implies the first lower bound $\ebar \geq 3$. However, this can be improved in most cases. 

\begin{lemma}\label{lem:lower-bd}
Let $\mesh$ be a conforming simplicial triangulation of an open bounded domain $\Omega \subset \mathbb{R}^3$ as above. 
Then, for any interior vertex $v_i \notin \partial \Omega$ the number $e_i$ of edges intersecting at the vertex $v_i$ is bounded below as
\begin{align*}
e_i \geq 4. 
\end{align*}
Furthermore, if $\Omega$ is pathwise connected and $\mesh$ has at least one interior vertex, then we have that
\begin{align}\label{est:lower-bd}
\ebar \geq 4. 
\end{align}
\end{lemma}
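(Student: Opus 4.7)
The plan for part (1) is to analyze the link of $v_i$ in $\mesh$. Since $v_i$ lies in the interior of $\Omega$, its star $\omega(v_i)$ is a topological closed $3$-ball, so its link is a simplicial triangulation of $S^2$ whose vertices are exactly the mesh neighbors of $v_i$. Because every triangulation of $S^2$ has at least $4$ vertices (realized by $\partial \tet$), one concludes $e_i \geq 4$. Equivalently, this bound can be read off from Example~\ref{ex:small-mesh} applied to $\omega(v_i)$ as a star mesh of its own: the argument there yields $V_b(\omega(v_i)) = e_i \geq 4$.

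For part (2), I would first recast $\ebar \geq 4$ as a relation among $T$, $V_i$, and the topological constants. Using $E = \tfrac{1}{2} \ebar V$ from Proposition~\ref{pro:3d-quant}, the inequality is $E \geq 2V$. Combining the Euler identity $V - E + F - T = \chi$ from~\eqref{eq:Euler} with $F = 2T + V_b - \chi_b$ (which follows from~\eqref{eq:topocef} together with $F_b = 2(V_b - \chi_b)$), I obtain
\begin{equation*}
E = V + T + V_b - \chi - \chi_b,
\end{equation*}
so $\ebar \geq 4$ is equivalent to $T \geq V_i + \chi + \chi_b$, where $V_i \coloneqq V - V_b$ denotes the number of interior vertices.

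To establish this reduced inequality, I would sum vertex--tetrahedron incidences. Since $\sum_v t_v = 4T$, splitting between interior and boundary vertices gives $4T = \sum_{v^*} t_{v^*} + \sum_{v_b} t_{v_b}$, with $t_{v^*} \geq 4$ by part~(1) and the identity $t_v = 2(e_v - 2)$ from~\eqref{eq:tetperv}. For a boundary vertex, whose link triangulates a $2$-disk, an analogous identity relates $t_{v_b}$ to the numbers of boundary and interior edges at $v_b$; combined with the boundary surface relation $2E_b = 6(V_b - \chi_b)$, this lets one express $\sum_{v_b} t_{v_b}$ in terms of interior--boundary edge counts, and rearranging (using that $v^*$ forces enough such edges) recovers the target inequality.

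The hard step will be closing the small-$V_b$ regime: raw star counting alone yields only $T \geq V_i + V_b/4$, which is insufficient when $V_b$ is small (say $V_b = 4$, attained e.g.\ by the Alfeld split of a single tetrahedron, where $\ebar = 4$ is tight). There I would combine pathwise connectedness of $\Omega$ with the fact that $v^*$'s star contains at least $4$ tetrahedra, each necessarily meeting enough boundary vertices when $V_b$ is small, to push $\sum_{v_b} t_{v_b}$ past the naive bound and secure $T \geq V_i + \chi + \chi_b$.
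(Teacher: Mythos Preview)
Your argument for part~(1) is correct and coincides with the paper's: it simply invokes Example~\ref{ex:small-mesh} applied to the star of an interior vertex.

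For part~(2), your reformulation $E = V + T + V_b - \chi - \chi_b$ and the equivalent target $T \geq V_i + \chi + \chi_b$ are correct, but the proof is incomplete precisely where you say it is. The final paragraph is not an argument but a hope: you observe that raw incidence counting yields only $T \geq V_i + V_b/4$, which falls short when $V_b$ is small, and then gesture at using pathwise connectedness to ``push $\sum_{v_b} t_{v_b}$ past the naive bound''. No mechanism is given for how connectedness enters such a global count, and in the tight case (Alfeld split of one tetrahedron: $V_i=1$, $V_b=4$, $T=4$, $\chi+\chi_b=3$) every inequality you would use is saturated, so there is no slack to exploit. As it stands, part~(2) is unproved.

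The paper takes an entirely different route that sidesteps this difficulty. Rather than a global count, it \emph{peels off} the problematic boundary vertices: any boundary vertex with $e_i=3$ lies in exactly one tetrahedron, and by pathwise connectedness that tetrahedron shares at least a full face with the rest of the mesh. Removing it deletes exactly one vertex and three edges while preserving connectedness and all interior vertices. Iterating, one reaches a submesh $\mesh'$ in which every vertex has $e_i \geq 4$, so $\ebar' \geq 4$. Since $k$ removals give $V' = V-k$ and $E' = E-3k$, the inequality $\ebar' \geq 4$ rearranges to $\ebar \geq 4 + 2k/V \geq 4$. Here pathwise connectedness is used \emph{locally} at each step (to guarantee a shared face), which is why the argument closes cleanly where your global count stalls.
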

\begin{proof} 
The estimate $e_i \geq 4$ for any interior vertex $v_i$ follows by Example~\ref{ex:small-mesh}. 

Since $e_i \geq 3$ in general and $e_i \geq 4$ for interior vertices $v_i$ the only vertices that might hamper the lower bound~\eqref{est:lower-bd} are boundary vertices $v_i$ with $e_i = 3$. 
For such vertices we can imagine successively removing the unique tetrahedron $\tet_i \in \mesh$ with $v_i \in \tet_i$ from the mesh.  
 Since the open domain $\Omega$ is pathwise connected, every tetrahedron shares at least one face ($3$ vertices and $3$ edges) with the remaining mesh.  
Hence, by removing $\tet_i$ we remove $4-3 = 1$ vertex and $ 6- 3 = 3$ edges. 
After removing $\tet_i$, the remaining mesh $\mesh_1 \coloneqq \mesh \setminus \{\tet_i\}$ still covers a pathwise connected closed domain, and hence we may proceed inductively. 
Indeed, the tetrahedron $\tet \in \mesh$ that shared a face with $\tet_i$ shares at least another face with the remaining mesh $\mesh \setminus \{\tet_i, \tet\}$, because otherwise $\Omega$ would not be pathwise connected. 

Note that this inductive process terminates with a simplicial mesh $\mesh' \subset \mesh$, since the number of vertices in $\mesh$ is bounded. 
The triangulation $\mesh'$ is not empty, since $\mesh$ has at least one interior vertex and since in this procedure no interior vertex becomes a boundary vertex. 
 By construction any vertex in $\mesh'$ satisfies that $e_i \geq 4$, and thus $\ebar' \geq 4$. 
Denoting by $k$ the number of tetrahedra removed, and hence the number of vertices removed, the triangulation $\mesh' $ has $V' = V -k$ vertices and $E' = E - 3k$ edges. 
Then, it follows that 
\begin{align*}
4 \leq \ebar' = \frac{2E'}{V'} = \frac{2 E - 6k}{V - k} = 
\frac{\ebar V - 6k}{V -k}.
\end{align*}
Rearranging leads to 
\begin{align*}
	\ebar \geq 4 + \frac{2k}{V} \geq 4,
\end{align*}
which proves the claim. 
\end{proof}

\begin{remark} 
	The proof also works, if $\overline{\Omega}$ is simply connected, in which case there might be degeneracies in the domain and in the mesh in the following way: There are two parts of the triangulation, that are connected only through a point or only through an edge. 
However, for the finite element setting such a situation is of lesser relevance and hence we refrain from presenting the proof. 
\end{remark}

Since the lower bound $4$ is attained by an Alfeld split of a single tetrahedron the estimate~\eqref{est:lower-bd} is sharp. 

\subsection{Upper bounds}
It is possible to obtain bounds on $\ebar$ using the mesh shape-regularity. 
There are several equivalent measures for this~\cite{brandts2008equivalence}, however using the minimal solid angle allows for a very simple proof. 
By the equivalence upper bounds in terms of other shape-regularity constants can be derived. 
 
For any tetrahedron $\tet \in \mesh$, we denote by $\alpha_{\tet,i} \in (0,2\pi)$ the solid angle of  $\tet$ with vertex $v_i \in \tet$ as apex, and the minimal solid angles at $v_i$ and in $\mesh$, respectively, by 
\begin{align*}
	\alpha_i \coloneqq  \min_{\tet \in \omega(v_i)} \alpha_{\tau,i} \quad \text{ and } \quad 
	\alpha_\mesh \coloneqq  \min_{i = 1, \ldots, V} \alpha_{i}. 
\end{align*}
With $t_i$ the number of tetrahedra sharing a vertex $v_i$ as introduced in Remark~\ref{rmk:altern}, we may consider the mean solid angle $\overline{\alpha}_i$ at $v_i$
\begin{align}\label{est:angles-1}
	\overline{\alpha}_i \coloneqq\frac{1}{t_i} \sum_{\tet \in \omega(v_i)} \alpha_{\tau,i}  \geq \alpha_{i} \geq \alpha_{\mesh}. 
\end{align}
For an interior vertex $v_i$ it satisfies
\begin{align}\label{est:angles-2}
	\overline{\alpha}_i=  \frac{4\pi}{t_i}. 
\end{align}
This means that for interior vertices $t_i$ and $\overline{\alpha}_i$ contain the same amount of information and by \eqref{eq:tetperv} and so do $e_i$ and $f_i$. 

\begin{lemma}\label{lem:upper-bd}
Let $\mesh$ be a conforming simplicial mesh of a domain $\Omega \subset \mathbb{R}^3$ as above. 
We assume that $\mesh$ is shape-regular with mean solid angle 
$\overline{\alpha}_i$ and minimal solid angles $\alpha_i, \alpha_{\mesh}\in (0,2\pi)$ for $i = 1, \ldots, V$ as defined above. 
Then, for any 
interior vertex $v_i$ in $\mesh$
 the number $e_i$ of edges intersecting at the vertex $v_i$ is bounded  by 
\begin{align*}
e_i 
= 
2 + \frac{2 \pi}{\overline\alpha_i}
\leq 2 + \frac{2 \pi}{\alpha_i} 
\leq
 2 + \frac{2 \pi}{\alpha_{\mesh}}.  
\end{align*}
\end{lemma}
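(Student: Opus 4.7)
The plan is to read off $e_i$ directly from the two ingredients already assembled just before the statement: the combinatorial identity \eqref{eq:tetperv} from Remark~\ref{rmk:altern} and the mean-solid-angle identity \eqref{est:angles-2}. There is no heavy work to do; the lemma is essentially a rearrangement.

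First I would invoke~\eqref{eq:tetperv}, which for any interior vertex $v_i$ says $t_i = 2(e_i - 2)$. Solving for $e_i$ gives $e_i = 2 + \tfrac{1}{2} t_i$. This step is purely combinatorial and uses only that the star $\omega(v_i)$ of an interior vertex is a triangulated topological ball, so that its boundary is topologically a 2-sphere; the counting argument was already carried out in Example~\ref{ex:small-mesh} and imported into Remark~\ref{rmk:altern}, so I can cite it directly.

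Next I would use~\eqref{est:angles-2}, $\overline{\alpha}_i = 4\pi/t_i$, to eliminate $t_i$ in favor of the mean solid angle. Substituting $t_i = 4\pi/\overline{\alpha}_i$ into the previous display yields the equality
\begin{equation*}
e_i = 2 + \frac{1}{2}\cdot\frac{4\pi}{\overline{\alpha}_i} = 2 + \frac{2\pi}{\overline{\alpha}_i}.
\end{equation*}
The two inequalities in the statement then follow at once from the chain $\overline{\alpha}_i \geq \alpha_i \geq \alpha_{\mesh}$ recorded in~\eqref{est:angles-1}, since $x \mapsto 2 + 2\pi/x$ is monotone decreasing on $(0,2\pi)$.

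There is no real obstacle: the geometric content (the solid angles at an interior vertex sum to $4\pi$) and the combinatorial content (the link of an interior vertex is a triangulated 2-sphere, so $t_i = 2(e_i-2)$) have both been established earlier. The only thing to be careful about is restricting attention to interior vertices, since \eqref{est:angles-2} fails on the boundary; this is precisely why the hypothesis of the lemma requires $v_i$ to be an interior vertex.
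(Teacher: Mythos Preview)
Your proposal is correct and follows essentially the same argument as the paper's own proof: both combine the combinatorial identity $e_i = 2 + \tfrac{1}{2}t_i$ from~\eqref{eq:tetperv} with the solid-angle relation $t_i = 4\pi/\overline{\alpha}_i$ from~\eqref{est:angles-2}, and then apply the chain~\eqref{est:angles-1}. The only cosmetic difference is the order in which the two substitutions are performed.
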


\begin{proof} 
	Let $v_i$ be an interior vertex. 
By definition of the the mean solid angle $\overline{\alpha}_i$ at $v_i$ as in \eqref{est:angles-1}, and with \eqref{est:angles-2} we have that 
\begin{align*}
t_i = \frac{4 \pi}{\overline\alpha_i} \leq \frac{4 \pi}{\alpha_{i}} \leq \frac{4 \pi}{\alpha_{\mesh}}. 
\end{align*}
Using~\eqref{eq:tetperv} for any interior vertex $v_i$ in $\mesh$ this shows
\begin{align*}
e_i = \tfrac{1}{2} t_i + 2 
\leq \frac{2 \pi}{\overline\alpha_i} + 2 
\leq \frac{2 \pi}{\alpha_i} + 2 
\leq \frac{2 \pi}{\alpha_\mesh} + 2,
\end{align*}
which proves the claim. 
\end{proof}

For the Freudenthal mesh, each tetrahedron has only two different solid angles,
$\pi/4$ and $\pi/12$, with each angle appearing for two vertices
of each tetrahedron.
This can easily be demonstrated using a representation of the tetrahedra
in the Freudenthal triangulation (a.~k.~a.~Kuhn cube), given e.g.~in
\cite{lrsBIBiu}, using standard formul{\ae} for the solid angle. 
Then, since the triangulation is regular, one has that 
\begin{align*}
	\alpha_i = \alpha_{\mesh} = \frac{\pi}{12}. 
\end{align*}
This results in the bound 
\begin{align*}
e_i \leq 26. 
\end{align*}
Note that such estimates do not serve practical purposes, since evaluating $\ebar = \frac{2E}{V}$ is still simpler. 
However, if by some refinement scheme that preserves the shape regularity, one has, e.g., the minimum solid angle at hand, then it gives a simple estimate. 

\begin{remark} 
For $d$-dimensional simplicial complexes with $f_i$ faces of dimension $i = 0, \ldots, d$ the $f$-vector of the complex is defined as $(f_{-1},f_0, \ldots, f_d) \in \mathbb{N}^{d+2}$ with the convention $f_{-1} = 0$, cf.~\cite[Def.~8.16]{Ziegler1995}, or~\cite{S.1992}. 
Note that simplicial triangulations in the context of numerical analysis are nothing else but pure simplicial complexes, i.e., a simplicial complex for which any $n$-face with $n \leq d$ is a subset of a $d$-simplex. 
This means that results on bounds of the $f$-vectors directly translate to our setting, with $\ebar = \frac{2 f_1}{f_0}$. 
For general simplicial complexes the theorem by Sperner~\cite{S.1928}, cf.~\cite[Lem.~8.29]{Z.1995} yields the upper bound
\begin{align*}
	\ebar \leq V-1. 
\end{align*}
This is too coarse an estimate for our needs. 
It may well be that in the fields of graph theory or of algebraic topology better estimates on $\ebar$ are available but unknown to the authors. 
In general results on $f$-vectors of simplicial complexes for numerically relevant (possibly even adaptive) refinement would be of great interest. 
\end{remark}

\subsection*{Asymptotic behavior under uniform mesh refinement} 
In Example~\ref{ex:freudenthal} we have seen that the value $\ebar = 14$ is attained for certain regular meshes. 
However, the values of the average quantities attained for the regular mesh also appear as limiting values for families of uniformly refined meshes for a large class of refinement algorithms, as proved in a general setting in~\cite[Thm.~3.6]{PR.2005}. 
See also~\cite{PR.2003.c} for specific refinement schemes. 
By \emph{uniform refinement} (opposed to adaptive refinement) we mean that each simplex in the triangulation is refined according to the refinement algorithm under consideration. 

For the sake of clarity we shall review the results on the asymptotic behavior of $\ebar$ under uniform mesh refinement in Lemma~\ref{lem:ebar-asymp} below. 
This is solely based on techniques in~\cite{PR.2005} and requires only minimal modification of the arguments. 
Afterwards in Example~\ref{ex:refinement} below we discuss examples of refinement algorithms that satisfy the assumptions.  
\smallskip 

We consider a \emph{uniform mesh refinement scheme} $\mathcal{R}$, which when applied to a simplicial conforming mesh $\mesh$ generates a simplicial conforming mesh $\mesh' = \mathcal{R}(\mesh)$ and has the following properties: 
\begin{enumerate}[label=(R\arabic*)]
	\item  \label{itm:R1} each tetrahedron in $\mesh$ is split into $8$ tetrahedra, each face is split into $4$ faces and each edge is split into $2$ edges;
	\item \label{itm:R2} the vertices in $\mesh'$ are exactly the vertices in $\mesh$ and all the mid points of edges in $\mesh$.
\end{enumerate} 

\begin{lemma}[cf.~{\cite[Thm.~3.6]{PR.2005}}]\label{lem:ebar-asymp}
Let $\Omega\subset\mathbb{R}^3$ be an open domain as above with $\mesh_0$ a conforming tetrahedral triangulation of $\Omega$. 
We assume that $\mathcal{R}$ is a (uniform) refinement scheme satisfying~\ref{itm:R1} and~\ref{itm:R2}. 
Then, for the family of meshes $\{\mesh_n\}_{n \in \mathbb{N}}$ generated by $\mesh_{n} = \mathcal{R}(\mesh_{n-1})$ for $n \geq 1$, one has that
 \begin{align*}
 \ebar_n \to 14, \quad  \text{ as }n \to \infty.
 \end{align*}  
\end{lemma}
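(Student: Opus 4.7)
The plan is to derive a one-dimensional recurrence for $\ebar_n$ from~\ref{itm:R1}--\ref{itm:R2}, then reduce it to a scalar fixed-point iteration whose dynamics force the limit $14$. From~\ref{itm:R2} every new vertex is the midpoint of an old edge, giving $V_{n+1} = V_n + E_n$. For the edges I partition those of $\mesh_{n+1}$ into three disjoint classes: the $2 E_n$ halves of old edges (by~\ref{itm:R1}); the new edges lying inside old faces, each face contributing exactly $3$, since the only conforming splitting of a triangle into four sub-triangles using only edge midpoints as new vertices is the red one, introducing the central sub-triangle; and the new edges strictly interior to each old tetrahedron. For the last count, I would apply Proposition~\ref{pro:3d-quant} to the refinement of a single tetrahedron, viewed as a conforming triangulation with $V'=10$, $T'=8$, $V'_b=10$, $\chi=1$, $\chi_b=2$. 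This forces $\ebar'=5$ and $E'=25$; subtracting the $12+12$ boundary edges gives exactly one edge per tetrahedron strictly interior to it. Therefore
\begin{equation*}
E_{n+1} = 2 E_n + 3 F_n + T_n.
\end{equation*}

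Substituting the expressions of Proposition~\ref{pro:3d-quant} for $F_n$ and $T_n$ in terms of $V_n, V_{b,n}, \ebar_n, \chi, \chi_b$, and dividing $2E_{n+1}$ by $V_{n+1}$, I would obtain the exact identity
\begin{equation*}
\ebar_{n+1} = \frac{V_n (9 \ebar_n - 14) - 8 V_{b,n} + 14 \chi + 8 \chi_b}{V_n (1 + \tfrac{1}{2}\ebar_n)}.
\end{equation*}
The boundary $\partial \Omega$ is itself refined by a 2D red refinement, so by Remark~\ref{rmk:2d-av-quant} the quantity $V_{b,n}$ grows by a factor tending to $4$ per step, while $V_{n+1}/V_n = 1 + \tfrac{1}{2}\ebar_n$ tends to $8$. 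Hence $V_{b,n}/V_n$ decays geometrically and asymptotically
\begin{equation*}
\ebar_{n+1} = g(\ebar_n) + o(1), \qquad g(x) \coloneqq \frac{18 x - 28}{2 + x}.
\end{equation*}

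The scalar dynamics of $g$ are then elementary. The equation $g(x) = x$ reduces to $x^2 - 16 x + 28 = 0$ with roots $2$ and $14$; since $g'(x) = 64/(2+x)^2 > 0$, the map $g$ is strictly increasing, with $g'(14) = 1/4 < 1$ and $g'(2) = 4 > 1$, so $14$ is an attracting fixed point and $2$ a repelling one. By Lemma~\ref{lem:lower-bd}, $\ebar_n \geq 4 > 2$ once $\mesh_n$ has at least one interior vertex, which occurs after at most one refinement; combined with the monotonicity of $g$ and $g(14) = 14$ this forces the iterates monotonically toward $14$.

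The main obstacle I anticipate is the rigorous handling of the $o(1)$ perturbation rather than any single conceptual step: one must quantify the geometric decay $V_{b,n}/V_n = O(2^{-n})$ and verify that the local contraction of $g$ near $14$ is strong enough to absorb it, so that the perturbed orbit genuinely converges to the attracting fixed point instead of drifting. This is the technical core of the more general statement in~\cite{PR.2005}, from which only a routine specialization to the refinement class~\ref{itm:R1}--\ref{itm:R2} is needed.
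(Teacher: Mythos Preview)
Your derivation of the recursions $V_{n+1}=V_n+E_n$ and $E_{n+1}=2E_n+3F_n+T_n$ (including the single-tetrahedron count giving exactly one interior edge) is correct and coincides with the paper's first step. From there the two arguments diverge. The paper observes that, together with $F_{n+1}=4F_n+8T_n$ and $T_{n+1}=8T_n$, one has a \emph{closed} linear recursion for $(V_n,E_n,F_n,T_n)$; it diagonalizes the $4\times 4$ transfer matrix (eigenvalues $1,2,4,8$) and reads off $\ebar_n\to 14$ from the explicit formula for $2E_n/V_n$ by comparing the $8^n$-coefficients. Your route instead eliminates $F_n,T_n$ via Proposition~\ref{pro:3d-quant}, which reintroduces $V_{b,n}$, and then studies the scalar Möbius map $g(x)=(18x-28)/(2+x)$.

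The fixed-point analysis of $g$ is correct, but the step ``$V_{n+1}/V_n=1+\tfrac12\ebar_n$ tends to $8$, hence $V_{b,n}/V_n$ decays geometrically'' is circular: that ratio tends to $8$ precisely because $\ebar_n\to 14$, which is the conclusion you are after. The decay can be obtained independently---for instance $T_n=8^nT_0$ holds exactly by~\ref{itm:R1}, and combining $2F_n-F_{b,n}=4T_n$ with the Euler relation gives $E_n-V_n=F_n-T_n-\chi\geq T_n-\chi$, hence $V_{n+1}=V_n+E_n\geq T_n-\chi$ and $V_n\gtrsim 8^n$, while $V_{b,n}\sim 4^n$---but you do not supply such an argument, and the appeal to~\cite{PR.2005} for a ``routine specialization'' glosses over exactly this point. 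Once the $o(1)$ term is controlled, one still has to argue that the perturbed orbit of a contraction converges to the fixed point; this is standard but your claim of monotone convergence need not survive the perturbation. The paper's linear-algebra route sidesteps all of this because the $(V,E,F,T)$-system never involves $V_b$; what you gain from the scalar picture---a transparent explanation of $14$ as the attracting fixed point and $2$ as the repelling one---comes at the cost of a perturbation argument that the explicit diagonalization simply does not need.
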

\begin{proof}
The proof follows~\cite[Thm~3.6]{PR.2005} where the asymptotic behavior of $\overline{\theta}$ and $\tbar$ is investigated.  
For the sake of readability for our special case we present a slightly simpler version of the proof than the one therein. 
We consider a mesh $\mesh$ with $V$ vertices, $E$  edges, $F$ faces, and $T$ tetrahedra. 
Let $\mesh' = \mathcal{R}(\mesh)$ be the refined mesh with mesh quantities  $T',F',E'$, and $V'$.  
Then by the properties~\ref{itm:R1},~\ref{itm:R2} we obtain 
 \begin{align}\label{eq:refine-1}
 	T'  = 8 T,\quad
 	V'  =  V + E,\quad
	E_b'=2E+3F, \quad \text{ and } \quad 
	F_b'=4F.
 \end{align}
For the mesh consisting of a single tetrahedron, i.e., with $\widetilde{T} = 1$, $\widetilde{V} = 4$, $\widetilde{E} = 6$ and $\widetilde{F} = 4$ this means that after the refinement one has
 \begin{align}\label{eq:refine-2}
	\widetilde{T}'  = 8 ,\quad
	\widetilde{V}'  =  10,\quad
	\widetilde{E}_b'= 24, \quad \text{ and } \quad 
	\widetilde{F}_b'=16.
\end{align}
Applying~\eqref{eq:topocef} and \eqref{eq:Euler} with $\chi = 1$ we find that 
\begin{align*}
\widetilde{F}' = \widetilde{2} T' + \frac{1}{2} \widetilde{F}_b' = 24
 \quad \text{ and }  \quad 
\widetilde{E}' = \widetilde{V}' + \widetilde{F}' - \widetilde{T}' - 1 = 25.	
\end{align*}
This, in turn, implies that the numbers of newly generated interior faces and interior edges inside that one tetrahedron are 
\begin{align*}
\widetilde{F}' - \widetilde{F}_b' = 24-16 = 8
 \quad \text{ and }  \quad 
\widetilde{E}'-\widetilde{E}_b' = 25-24 = 1.
\end{align*}
This happens in every tetrahedron of a general mesh with $T$ tetrahedra. 
Thus, we obtain that after the uniform refinement one has
 \begin{align}\label{eq:refine-2}
 F' = 4F + 8T \quad \text{ and } \quad 
 E' = 2E + 3F + T.
 \end{align}
This means that 
\begin{align*}
\begin{pmatrix} 
V' \\ E' \\ F' \\ T'
\end{pmatrix} = 
\underbrace{\begin{pmatrix} 
1 & 1 & 0 & 0\\
0 & 2 & 3 & 1\\
0 & 0 & 4 & 8\\
0 & 0 & 0 & 8
\end{pmatrix}}_{\eqqcolon A}
\begin{pmatrix} 
V \\ E \\ F \\ T
\end{pmatrix}. 
\end{align*}
Then, for a sequence of meshes $(\mesh_n)_{n \in \mathbb{N}}$ generated by $\mesh_n = \mathcal{R}(\mesh_{n-1})$, for $n \in \mathbb{N}$ starting from $\mesh_0$, we find 
\begin{align*}
\begin{pmatrix} 
V_n \\ E_n \\ F_n \\ T_n
\end{pmatrix} = 
A^n
\begin{pmatrix} 
V_0 \\ E_0 \\ F_0 \\ T_0
\end{pmatrix},
\end{align*}
where $V_n, E_n, F_n, T_n$ are the respective mesh quantities in $\mesh_n$, for $n \in \mathbb{N}$. 
By symbolic calculation, e.g., with the Matlab toolbox~\cite{matlabsymbolic} we can express the iteration matrix as $A = V D V^{-1}$ with 
\begin{align*}
D = \begin{pmatrix} 
1 & 0 & 0 & 0\\
0 & 2 & 0 & 0\\
0 & 0 & 4 & 0\\
0 & 0 & 0 & 8
\end{pmatrix}
 \quad \text{ and } \quad 
V = \begin{pmatrix} 
1 & 1 & 1/2 & 1/6\\
0& 1& 3/2& 7/6\\
0& 0&   1&  2\\
0& 0&   0&   1
\end{pmatrix}. 
\end{align*}
Then the inverse of $V$ is given by 
\begin{align*}
V^{-1} = \begin{pmatrix} 
1& -1&    1&   -1\\
0&  1& -3/2& 11/6\\
0&  0&    1&   -2\\
0&  0&    0&    1
\end{pmatrix}. 
\end{align*}
Thus, we obtain that
\begin{align*}
A^n  &=(V D V^{-1})^n = V D^n V^{-1} \\
& = 
\begin{pmatrix}
1&  2^n - 1&  \tfrac{1}{2} 4^n- \tfrac{3}{2} 2^n + 1&   \tfrac{11}{6} 2^n - 4^n + \tfrac{1}{6} 8^n - 1\\
0&     2^n& \tfrac{3}{2}4^n - \tfrac{3}{2} 2^n & \tfrac{11}{6} 2^n - 3\cdot 4^n + \tfrac{7}{6}8^n\\
0&       0&                   4^n&                  2 \cdot 8^n - 2\cdot 4^n\\
0&       0&                     0&                           8^n
\end{pmatrix}. 
\end{align*}
This implies that 
\begin{align*}
\ebar_n &= \frac{2E_n}{V_n} \\
 &= 2\frac{2^n E_0 + \tfrac{3}{2} (4^n-2^n) F_0 + \left(\tfrac{11}{6}2^n - 3 \cdot 4^n + \tfrac{7}{6} 8^n \right)T_0}{V_0 + (2^n-1)E_0 + \left(\tfrac{1}{2}4^n - \tfrac{3}{2}2^n + 1 \right)F_0 + \left(\tfrac{11}{6} 2^n - 4^n + \tfrac{1}{6} 8^n - 1\right) T_0}.
\end{align*}
Considering the highest order terms with factor $8^n$ this yields that
\begin{align*}
\ebar_n \to 2 \cdot \frac{7}{6} \cdot 6 = 14, \quad \text{ as } n \to \infty. 
\end{align*}
\end{proof}

Additionally, we can estimate $\ebar$ above. 

\begin{lemma}
	\label{lem:ebar-est}
	Let $\Omega\subset\mathbb{R}^3$ be an open domain as above which is pathwise connected, and let $\chi, \chi_b$ be the topological parameters of $\overline{\Omega}$. 
Let $\mesh_0$ be a conforming simplicial triangulation of $\Omega$ with at least one interior vertex and $\mathcal{R}$ a refinement scheme satisfying~\ref{itm:R1} and~\ref{itm:R2}. 
Then, for the family of meshes $\{\mesh_n\}_{n \in \mathbb{N}}$ generated by $\mesh_{n} = \mathcal{R}(\mesh_{n-1})$ for $n \geq 1$, one has that
 \begin{align*}
 \ebar_n \leq 
  18 + \tfrac{2}{15} \max(0,7\chi+ 4 \chi_b - 96). 
 \end{align*}   
In particular, if $7 \chi + 4 \chi_b \leq  96$, this yields the upper bound $18$. 
\end{lemma}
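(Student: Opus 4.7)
The plan is to exploit a one-step recursion for $\ebar_n$ under a refinement satisfying~\ref{itm:R1}--\ref{itm:R2}. Starting from the identities $V_{n+1} = V_n + E_n$ and $E_{n+1} = 2E_n + 3F_n + T_n$ derived in the proof of Lemma~\ref{lem:ebar-asymp}, and substituting $E_n = \tfrac12 \ebar_n V_n$ together with the expressions for $F_n$ and $T_n$ from Proposition~\ref{pro:3d-quant}, a short computation yields
\begin{equation*}
\ebar_{n+1} \;=\; \frac{9\ebar_n - 14 + (2\delta - 8V_{b,n})/V_n}{1 + \ebar_n/2}, \qquad \delta \coloneqq 7\chi + 4\chi_b.
\end{equation*}
Subtracting $18$ from both sides and simplifying rearranges this into the key identity
\begin{equation*}
\ebar_{n+1} - 18 \;=\; \frac{2\,(\delta - 4V_{b,n} - 16V_n)}{V_n\,(1 + \ebar_n/2)}.
\end{equation*}

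The second step is to collect three lower bounds valid for every $n \geq 0$. Under refinements satisfying~\ref{itm:R1}--\ref{itm:R2} the sequences $V_n$ and $V_{b,n}$ are non-decreasing, so $V_n \geq V_0$ and $V_{b,n} \geq V_{b,0}$. Because $\partial\Omega$ is a closed polyhedral surface one has $V_{b,0} \geq 4$, and the standing hypothesis that $\mesh_0$ possesses at least one interior vertex forces $V_0 \geq V_{b,0} + 1 \geq 5$. Moreover, each refined mesh $\mesh_n$ retains an interior vertex, so Lemma~\ref{lem:lower-bd} yields $\ebar_n \geq 4$ and hence $1 + \ebar_n/2 \geq 3$. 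Combining these gives
\begin{equation*}
4 V_{b,n} + 16 V_n \;\geq\; 16 + 80 \;=\; 96, \qquad V_n \,(1 + \ebar_n/2) \;\geq\; 15.
\end{equation*}

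With the key identity and these lower bounds in hand, the conclusion follows by cases. If $\delta \leq 96$ the numerator of the key identity is non-positive, so $\ebar_{n+1} \leq 18$. If $\delta > 96$ the numerator is at most $2(\delta - 96)$ and the denominator is at least $15$, giving $\ebar_{n+1} \leq 18 + \tfrac{2}{15}(\delta - 96)$. In either case $\ebar_{n+1}$ is bounded by the claimed $M$, proving the lemma.

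The main obstacle is the clean algebraic derivation of the key identity; once it is in place, the constants $96$ and $15$ emerge transparently as the minimal values of $4 V_{b,0} + 16 V_0$ and $V_0\cdot 3$, respectively, both attained by the Alfeld split of a single tetrahedron (the smallest admissible initial mesh). A minor bookkeeping step is to verify that refinement preserves the existence of an interior vertex, so that Lemma~\ref{lem:lower-bd} indeed applies at every level $n$.
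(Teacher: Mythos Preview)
Your argument is correct and follows essentially the same route as the paper's proof: both derive the identity
\[
\ebar' - 18 \;=\; \frac{2\,(7\chi + 4\chi_b - 4V_b - 16V)}{V + E}
\]
for a single refinement step, then plug in $V \geq 5$ and $V_b \geq 4$ to produce the constant $96$ and bound the denominator below by $15$. The only cosmetic difference is that you obtain $V_n + E_n \geq 15$ via $\ebar_n \geq 4$ (Lemma~\ref{lem:lower-bd}) together with $V_n \geq 5$, whereas the paper cites $E \geq 10$ directly from Example~\ref{ex:small-mesh}; the two are equivalent since $E = \tfrac12\ebar V$.
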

\begin{proof}
It suffices to show that for any mesh $
 \mesh$ with mesh quantities $V, E, F, T$, for the refined mesh $\mesh' = \mathcal{R}(\mesh)$ with mesh quantities $V',E',F',T'$ one has that 
 \begin{align*}
\ebar' = \frac{2E'}{V'}
\end{align*}
satisfies the estimate. 

As in the proof of Lemma~\ref{lem:ebar-asymp} with $\mathcal{R}$ satisfying~\ref{itm:R1},~\ref{itm:R2} and applying Proposition~\ref{pro:3d-quant} we find that 
\begin{align*}
\ebar' 
&= \frac{2E'}{V'} 
= 2 \frac{2E + 3 F  + T}{V + E} \\
&= 2 \frac{2E + 6 E - 6 V - 3 V_b + 6 \chi + 3 \chi_b + E - V - V_b + \chi + \chi_b}{V + E}\\
& = 2 \frac{9E -7V - 4 V_b + 7 \chi + 4 \chi_b}{V + E}.
\end{align*}
Since $\mesh$ has at least one interior vertex, we have that $V \geq 5$, $V_b \geq 4$ and thus,
\begin{align*}
\ebar' 
&= 18  + \frac{2}{V + E} \left(-16V - 4 V_b + 7 \chi + 4 \chi_b\right)\\
& \leq
 18  + \frac{2}{V + E} \left(-96 + 7 \chi + 4 \chi_b\right).
\end{align*}
If $7\chi + 4 \chi_b \leq 96$, then the upper bound is $18$. 
Otherwise we use $V\geq 5$ and $E \geq 10$ (see Example~\ref{ex:small-mesh}) to obtain 
\begin{align*}
\ebar' 
& \leq
 18  + \tfrac{2}{15}\left(-96 + 7 \chi + 4 \chi_b\right),
\end{align*}
which proves the claim. 
\end{proof}

For a simply connected domain one has that $\chi = 1$ and $\chi_b = 2$. This implies that $7\chi + 4 \chi_b = 15 < 96$, and hence Lemma~\ref{lem:ebar-est} yields the upper bound $18$. 

\begin{example}\label{ex:refinement} 
There are various mesh refinement algorithms that satisfy the assumptions~\ref{itm:R1},~\ref{itm:R2} and consequently generate families of meshes with $\ebar$ converging to $14$. In ~\cite{PR.2003.c,PR.2005} it is shown that $\tbar$ converges to $24$ under those general conditions. 

Mesh refinement schemes satisfying the assumptions include both the generalization of the red refinement to 3D as well as bisection refinements. 
We refer to~\cite{ref:Freudenthalulation} for more properties of the refinement schemes mentioned below.  

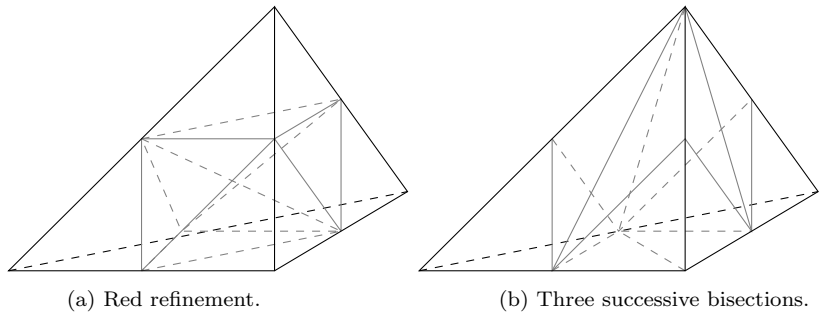
\begin{figure}[ht]
	\begin{subfigure}[b]{0.49\textwidth}
	\begin{tikzpicture}[scale=3.5]
		\draw[white] (-0.3,0) -- (-0.3,0.3);
		\draw[dashed] (0,0) -- (1.5,0.3);
		\draw[gray] (0.5,0) -- (1,0.5)--(0.5,0.5) -- (0.5,0);
		\draw[gray] (1.25,0.15)--(1.25,0.65)  -- (1,0.5) -- (1.25,0.15);
		\draw[gray,dashed](0.5,0.5) -- (1.25,0.65)  -- (0.65,0.15) -- (0.5,0.5);
		\draw[gray,dashed] (0.5,0) -- (1.25,0.15)  -- (0.65,0.15) -- (0.5,0);   
		\draw[gray,dashed] (0.5,0.5) -- (1.25,0.15);
		\draw (0,0) -- (1,0) -- (1,1) -- cycle;
		\draw (1,0) -- (1.5,0.3) -- (1,1);
	\end{tikzpicture}     
		\caption{Red refinement.}
		\label{fig:red-ref}
	\end{subfigure}
	\hfill
	\begin{subfigure}[b]{0.49\textwidth}
	\begin{tikzpicture}[scale=3.5]
		\draw[dashed] (0,0) -- (1.5,0.3);
		\draw[gray, dashed] (1,0) -- (0.75,0.15) -- (1,1);
		\draw[gray,dashed] (0.5,0) -- (0.75,0.15)--(1.25,0.15);
		\draw[gray] (0.5,0) -- (1,1)--(1.25,0.15);
		
		\draw[gray,dashed] (0.5,0.5) -- (0.75,0.15)--(1.25,0.65);
		\draw[gray] (0.5,0.5) -- (0.5,0) -- (1,0.5) -- (1.25,0.15)--(1.25,0.65);
		\draw (0,0) -- (1,0) -- (1,1) -- cycle;
		\draw (1,0) -- (1.5,0.3) -- (1,1);
	\end{tikzpicture}     
		\caption{Three successive bisections.}
		\label{fig:bisec}
	\end{subfigure}
	\caption{Refinement of a single tetrahedron.}
\end{figure}

 \begin{enumerate}[label=(\alph*)]
 \item  \label{itm:Freudenthal-ref} The Freudenthal algorithm constitutes the uniform refinement of the Freudenthal triangulation as introduced in Example~\ref{ex:freudenthal} to produce a finer Freudenthal triangulation.   
 It dates back to~\cite{F.1942} can be performed in any dimension, see Figure~\ref{fig:red-ref} for the splitting of a single tetrahedron $\tet$.
For the Freudenthal triangulation it agrees with  3D red refinement schemes independently introduced by~\cite{B.1995,Z.1995,moore1995adaptive} for general triangulations. 
	
In the 3D red refinement a tetrahedron $\tet$ is split as follows, cf. Figure~\ref{fig:red-ref}. 
Each face is split by a 2D red refinement, i.e., each edge is bisected and each face is split into 4 congruent triangles by connecting the mid points of the edges by inserting a new edge. 
This defines 4 new tetrahedra, each of which is associated to one vertex of the old tetrahedron $\tet$. 
What remains is a octahedron inside $\tet$. 
This is split into further 4 tetrahedra by inserting one of the diagonals as a new edge. 
Clearly,~\ref{itm:R1} and~\ref{itm:R2} are satisfied, and hence the above results are applicable. 
Note that there are different options of which interior edge to choose, and, e.g., choosing the longest diagonal does not preserve the shape-regularity, see~\cite{Z.1995,moore1995adaptive}. 

\item \label{itm:bisection} There are various refinements based on bisection, cf.~\cite[Sec.~3.2]{ref:Freudenthalulation}. 
Here we focus on uniform refinements, even though some of the bisection algorithms are particularly suited for adaptive mesh refinement. 

By a full uniform refinement we mean three successive bisections, each of which halves the volume. 
In this way one tetrahedron is split into $8$ subtetrahedra, each face is split into $4$ and the bisection is done in a way such that afterwards each edge is bisected once, see Figure~\ref{fig:bisec}. 
This generates new vertices exactly at the mid points of the original edges. 
Consequently,~\ref{itm:R1} and~\ref{itm:R2} are satisfied and again the above results apply. 
The following schemes differ in the way the bisection edges are chosen. 
	\begin{enumerate}[label = (\arabic*)] 
		\item 
	Let us consider the 3D generalization~\cite{B.1991,M.1994.T,LJ.1995} of the newest vertex bisection, originally introduced in 2D in~\cite{M.1991}.  
For general dimensions the refinement scheme is presented by~\cite{Maubach1995,Tr.1997} and further developed in~\cite{S.2008}. 
	It assumes a condition on the initial mesh $\mathcal{T}_0$, relaxed to the so-called matching neighbor condition in~\cite{S.2008}. 
	Under this condition uniform refinements, meaning that each simplex is refined once by the local refinement routine, are known to yield a conforming triangulation. 
	
	This refinement scheme has the advantage that it is known to preserve the shape-regularity. 
	Furthermore, it is particularly suited for adaptive mesh refinement and can be proven to satisfy optimality, see~\cite{BDD.2004}. 
	\item Choosing the longest edge as bisection edge leads to the refinement schemes introduced in 2D in~\cite{R.1984} and in 3D in~\cite{R.1991}. 
	The preservation of the shape-regularity seems to be available only in the 2D case. 
	Note that conformity of the uniform refinement is available only for special initial meshes. 
	\end{enumerate} 
		\item  \label{itm:CP}
The Carey--Plaza algorithm~\cite{PC.2000} relies one full bisection of the faces (by the 2D longest edge refinement) into 4 faces and a subsequent conforming subdivision of the tetrahedra into 8 tetrahedra. 
The authors refer to this as skeleton-based bisection. 
Since faces are split first, the uniform refinement is conforming. 
This leads to~\ref{itm:R1} and~\ref{itm:R2} being satisfied and thus the above results hold. 
This is the refinement algorithm that FEniCS (dolfin) and FEniCSx utilize. 
 \end{enumerate} 
\end{example}

Note that for the schemes we have presented, as the mesh is refined the average quantity $\ebar$ converges to the value of the regular mesh, cf. Example~\ref{ex:freudenthal}. 
However, there are also uniform refinement schemes that have other limiting values, e.g., the refinement by successive Alfeld refinement or by successive Worsey--Farin refinement. 
However, those refinement schemes do not yield shape-regular meshes.

\begin{remark}[other uniform mesh refinement schemes]\label{rmk:other-val} \hfill
	\begin{enumerate}
		\item 
For the uniform refinement resulting from successive barycentric refinement (cf.~Alfeld split) in general dimension $d \geq 2$  with the corresponding arguments as in the proof of Lemma~\ref{lem:ebar-asymp} one can show that 
	\begin{align*}
		\tbar_n \to d(d+1) \quad \text{ and } \quad \ebar_n \to 2(d+1),
	\end{align*}
asymptotically as $n \to \infty$. 
Note that this matches the limiting values for $d = 2$ in Remark~\ref{rmk:2d-av-quant}. 
In dimension $d = 3$ this yields $\tbar_n \to 12$ and $\ebar_n \to 8$, as $n \to \infty$, cf.~\cite[Thm.~5.3]{PR.2005}.  
This limiting behavior is true also for adaptive barycentric  refinement.  
However, due to the deterioration of the shape-regularity, barycentric refinement is not of practical interest. 

\item For the uniform barycentric refinement that generalizes the Powell--Sabin split to general dimensions the asymptotic mesh quantities are investigated in~\cite[Thm.~1.3]{SW.2019} and references therein. 
	For $d = 3$ the asymptotic values  $ 22$ for $\tbar$ and $ 13$ for $\ebar$ as in~\cite{PR.2005} are recovered. 
	\end{enumerate}
\end{remark}

We chose the quantity $\ebar$ to compare the dimensions of the finite element spaces, since it includes the lowest dimensional subsimplices of the meshes. 

\begin{remark}
To visualize the asymptotic behavior of the mesh quantities we consider the refinement of two initial meshes. 
The first domain $\Omega$ is the cube $[0,20]^3$ with a Freudenthal mesh $\mesh_F$ consisting of $5 \times 5 \times 5$ Kuhn cubes, see Figure~\ref{fig:init_mesh_F}.

The second domain $\Omega$ is a diamond shaped domain for $k \geq 3$. 
It is defined as the convex hull of the two spine vertices $(0,0,-20)$ and $(0,0,20)$ and $k$ equally spaced points $(x,y,0)$ on the circle with radius $20$ in the $(x,y)$-plane.  
The initial mesh consists of $2k$ tetrahedra, each of which contains the mid point $(0,0,0)$ and one of the two spine vertices. 
We refer to this mesh as $\mesh_{D,k}$, see Figure~\ref{fig:initial_mesh_diam}. 
\begin{figure}[ht]
\centering
\begin{subfigure}[b]{0.49\textwidth}
\begin{center}
    \includegraphics[width = \linewidth]{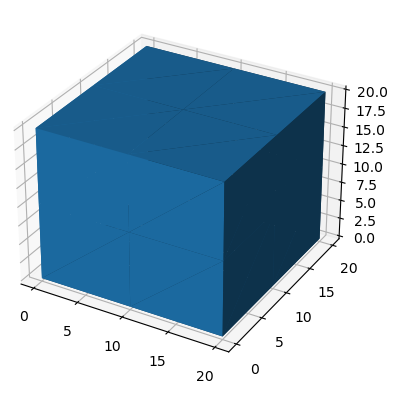}
\end{center}  \caption{Freudenthal mesh $\mesh_F$.}
\label{fig:init_mesh_F}
\end{subfigure}
     \hfill
\begin{subfigure}[b]{0.49\textwidth}
\begin{center}
  \includegraphics[width = \linewidth]{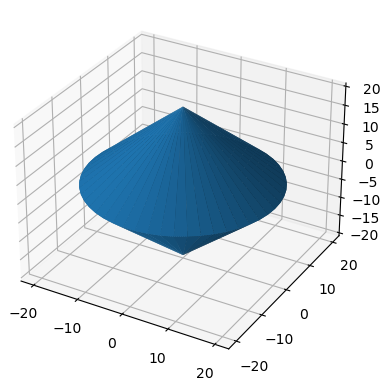}
\end{center}\caption{Diamond shaped $\mesh_{D,k}$ for $k = 50$. }
\label{fig:initial_mesh_diam}
\end{subfigure}
\caption{Initial meshes for experiment in FEniCS, generated with \texttt{tikzplotlib}. }
\label{fig:meshes}
\end{figure}

FEniCS~\cite{Fenics.2015}
 uses the Carey--Plaza algorithm~\cite{PC.2000} for mesh refinement, cf. Example~\ref{ex:refinement}~\ref{itm:CP}. 

\begin{itemize}
	\item We determine the mesh quantities for both initial meshes $\mesh_F$ and $\mesh_{D,50}$ experimentally for uniform mesh refinement. 
	For uniform mesh refinement of the regular initial mesh $\mesh_F$ the Carey--Plaza algorithm coincides with the Freudenthal algorithm in Example~\ref{ex:refinement}~\ref{itm:Freudenthal-ref} and the bisection algorithms in Example~\ref{ex:refinement}~\ref{itm:bisection}.  
	
	Figure~\ref{fig:e-unif} displays the average quantity $\ebar$ plotted over the total number of vertices contained in the meshes. 
	The experiment confirms the asymptotic behavior in the sense that $\ebar$ converges to $14$, as $V\to \infty$.  
\begin{figure}[ht]
\centering
\begin{subfigure}[b]{0.49\textwidth}
\begin{center}
 	\includegraphics[width = \linewidth]{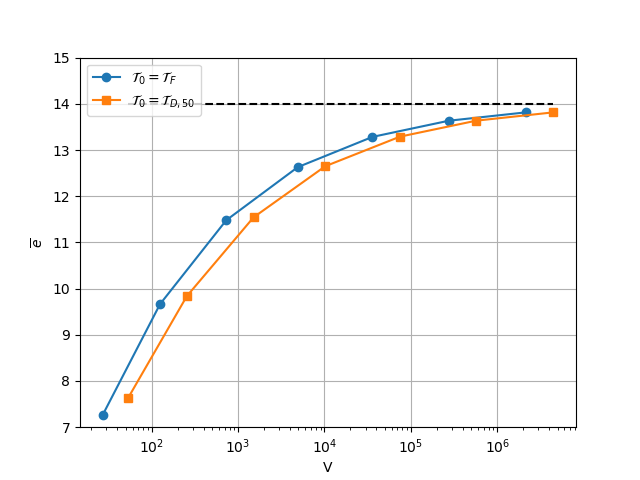}
 \end{center} \caption{Uniform mesh refinement}
\label{fig:e-unif}
\end{subfigure}
     \hfill
\begin{subfigure}[b]{0.49\textwidth}
\begin{center}
  	\includegraphics[width = \linewidth]{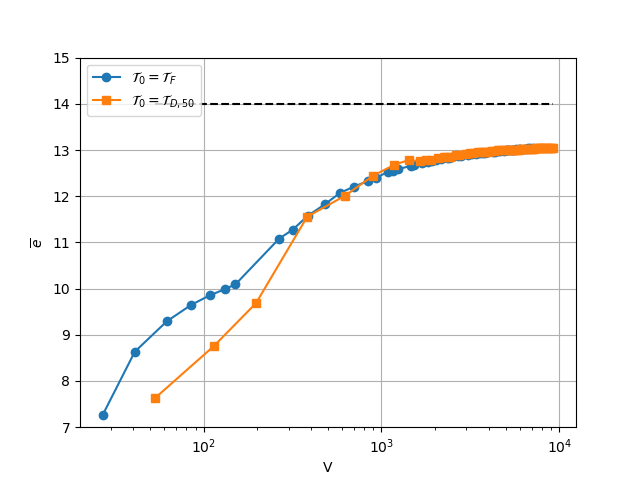}
 \end{center} \caption{Adaptive mesh refinement}
\label{fig:e-adapt}
\end{subfigure}
\caption{Behavior of $\ebar$ for a family of meshes with $V$ vertices, generated by the Carey--Plaza algorithm from initial meshes $\mesh_0$.  }
\label{fig:e-conv}
\end{figure}

\item Additionally, we consider adaptive mesh refinement with the Carey--Plaza algorithm for both initial meshes. 
In each refinement step, the simplex containing the point $v^* \in \Omega \setminus \partial \Omega$ is refined, and the conforming closure is taken. 
As refinement point we choose $v^* = (\tfrac{1}{9},\tfrac{1}{3},\tfrac{1}{3})$, which is contained in both domains.  
Figure~\ref{fig:e-adapt} shows the average quantities plotted over the total number of vertices contained in the  meshes. 
Here accuracy issues occur before the value of $\ebar$ is close to the value $14$. Indeed,  for adaptive mesh refinement the number of vertices grows much slower in the mesh refinement. Still, asymptotically the value $14$ might be approached, but the experiment does not allow us to verify this.
\end{itemize}
The FEniCS code used to produce Figures \ref{fig:meshes} and~\ref{fig:e-conv} is available in \cite{fenicscode_stokescounting}. 
\end{remark}
 
To the best of our knowledge there are no results on the asymptotic mesh quantities for adaptive mesh refinement schemes. 
This is left to future work. 

\section{Piecewise polynomials on split meshes}
\label{sec:pposm}

We now derive asymptotic formul{\ae} for the dimensions of various finite element spaces
on split 3D meshes in terms of the basic descriptors $\ebar$ and $V$ of the original mesh $\mesh$.
This will allow us to compare the dimensions of the standard Scott--Vogelius elements on the original mesh $\mesh$ with elements using lower polynomial degrees on split meshes. 

For a mesh $\widetilde{\mesh}$ and $k \geq 1$ we denote by $P_k(\widetilde{\mesh})$ the space of continuous, piecewise polynomial functions of polynomial degree at most $k$ on $\widetilde{\mesh}$. 
Then $\vhk_k(\widetilde{\mesh})$ denotes the space of vector-valued functions with each component in $P_k(\widetilde{\mesh})$, i.e., we have that $\dim(\vhk_k(\widetilde{\mesh})) = 3 \dim(P_k(\widetilde{\mesh}))$. 
Furthermore, we denote by $\Pi_{k-1}(\widetilde{\mesh})$ the space of (discontinuous) piecewise polynomials of degree at most $k-1$ on $\widetilde{\mesh}$. 

Let us first summarize the dimensions of the finite element spaces in terms of the mesh quantities $\widetilde{V}, \widetilde{E}, \widetilde{F}$, and $\widetilde{T}$, denoting the number of vertices, edges, faces, and tetrahedra in $\widetilde{\mesh}$, respectively. 
Counting the Lagrange nodes we have 
\begin{align}\begin{split}\label{eq:poly-cont}
\dim(P_1(\widetilde{\mesh})) &= \widetilde{V},\\
\dim(P_2(\widetilde{\mesh})) &= \widetilde{V} + \widetilde{E},\\
%\dim(P_3(\widetilde{\mesh})) &= \widetilde{V} + 2 \widetilde{E} + \widetilde{F},\\
%\dim(P_4(\widetilde{\mesh})) &= \widetilde{V} + 3 \widetilde{E} + 3\widetilde{F}  + \widetilde{T},\\
\dim(P_k(\widetilde{\mesh})) &= \widetilde{V} + (k-1) \widetilde{E} + \tfrac{1}{2}(k-1)(k-2)\widetilde{F} +\tfrac{1}{6} (k-1)(k-2)(k-3)\widetilde{T},
\end{split}
\end{align}
for $ k \geq 3$. 
Denoting by $P_{k-1}$ the space of polynomials of degree at most $k-1$ on a single tetrahedron, we also have 
\begin{align}\label{eq:poly-disc}
\dim(\Pi_{k-1}(\widetilde{\mesh})) = \widetilde{T} \, \dim( P_{k-1}) 
= \widetilde{T}\,  {2+k \choose k-1}
\qquad \text{ for } k \geq 1. 
\end{align} 
Then, for the pressure space $\nabla \cdot \vhk_k(\widetilde{\mesh}) \subset \Pi_{k-1}(\widetilde{\mesh})$ we obviously have 
\begin{align}
	\dim (\nabla \cdot \vhk_k(\widetilde{\mesh})) \leq \dim(\Pi_{k-1}(\widetilde{\mesh})). 
\end{align}

Now we want to compute or estimate the dimensions of the mixed finite element spaces $(\vhk_k(\widetilde{\mesh}),\nabla \cdot \vhk_k(\widetilde{\mesh}))$ on the original mesh $\mesh$ and the split meshes $\meshA$ and $\meshW$ as introduced in section~\ref{sec:splitmesh}, i.e., for $\widetilde{\mesh} \in \{\mesh, \meshA, \meshW\}$. 
For this we only need to combine the above identities~\eqref{eq:poly-cont},~\eqref{eq:poly-disc} and
 Lemma~\ref{lem:split-meshes}, which describes the mesh quantities of the split meshes in terms of the mesh quantities of the original mesh. 
Then we may apply the identities in Remark~\ref{rmk:3d-approx-quant} to express the approximate dimensions in terms of only $V$ and $\ebar$ for sufficiently large meshes. 
The resulting approximate dimensions in terms of $\ebar$ and $V$ are collected in Table~\ref{tbl:compdim}, and for the case $\ebar = 14$ they are collected in Table~\ref{tbl:compdim-14}. 

	\begin{table}[ht!] \small
  \begin{TAB}(r)[3pt]{|c|c|c|c|c|c|}{|cc|cc|c|}
    mesh $\widetilde{\mesh}$ & $\meshW$ & $\meshW$ & $\meshA$ & $\mesh$ & $\mesh$
    \\
    degree &   $k = 1$ &   $k = 2$ &  $k = 3$ & $k = 4$ & $k = 6$ 
\\   
     $\dim \vhk_k(\widetilde{\mesh})$ & $(4.5 \overline{e}-6)V$  & $(27 \overline{e} - 48)V$ & $(28.5\overline{e}-48)V$ & $(15\overline{e}-18)V$ & $(52.5\overline{e}-87)V$     \\ 
  $\dim \sdiv \vhk_{k}(\widetilde{\mesh})$
  & 
$(4 \ebar-8)V$
& 
$ (19\ebar-38)V$
  & 
  $(20\overline{e}-40)V$
  & $(10\overline{e}-20)V$ 
  & $(28\overline{e}-56)V$   
   \\ 
 total &
$(8.5\overline{e} - 14)V$
&
$(46\ebar-86)V$
 & $(48.5\overline{e}-88) V$ & $(25 \overline{e}-38)V$ & $(80.5\overline{e} - 143)V$ \\    
  \end{TAB}
  \caption{Approximate dimensions of velocity space and approximate (best available) upper bound on the dimension of the pressure space. 
  	For an original mesh $\mesh$, the Worsey--Farin split $\meshW$ and the Alfeld split $\meshA$ are defined in section~\ref{sec:pposm}. 
The quantity $\ebar$ is defined in~\eqref{eqn:ebardef} and $V$ is the number of vertices in the original mesh $\mesh$. 
The approximate dimensions for the pressure space take into account the singular edges introduced by the split but do not take into account singular edges and vertices of $\mesh$. 
The bounds on the dimensions of the pressure spaces for the Worsey--Farin split 
are justified in~\eqref{eqn:lowersypres} and \eqref{eq:dim-V2-meshW-2}. 
}
  \label{tbl:compdim}
\end{table}

	\begin{table}[htb] \small
  \centering
  \begin{TAB}(r)[3pt]{|c|c|c|c|c|c|}{|cc|cc|c|}
  mesh $\widetilde{\mesh}$  & $\meshW$ & $\meshW$ & $\meshA$ & $\mesh$ & $\mesh$  \\
    degree &  \;\;$k = 1$\;\; &  \;\; $k = 2$ \;\; &  \;\; $k = 3$ \;\; &  \;\;$k = 4$ \;\; &  \;\;$k = 6$ \;\; \\   
     $\dim \vhk_k(\widetilde{\mesh})$ & $57V$  & $330V$ & $351V$ &
$192V$ & $648V$   \\ 
   $\dim \sdiv \vhk_{k}(\widetilde{\mesh})$
   &
$48V$ &
$228V$
   & 
   $240V$ 
   &
    $120V$ 
   &
    $336V$
       \\ 
 total &
$105V$ &
$558 V$
  & $591 V$ &
$312V$ & $984V$ \\    
  \end{TAB}
  \caption{Approximate dimensions of the velocity space and approximate upper bound on the dimension of the pressure space for $\overline{e} = 14$.} 
  \label{tbl:compdim-14}  
\end{table}

One may have  $\sdiv \vhk_{k}(\widetilde{\mesh}) \subsetneq \Pi_{k-1}(\widetilde{\mesh})$, and thus the dimension of the mixed finite element space $(\vhk_k(\widetilde{\mesh}),\nabla \cdot \vhk_k(\widetilde{\mesh}))$ is slightly smaller than that of $(\vhk_k(\widetilde{\mesh}),\Pi_{k-1}(\widetilde{\mesh}))$. 
This is related to possible deficiencies in the mesh $\widetilde{\mesh}$ such as edge singularities. 

In the following we shall neglect this effect for the original mesh $\mesh$, because in general there is no strategy to count or to characterize the pressure space available. 
However, we shall take into account the singular edges generated by the splits where known, since they have a substantial effect. 

\subsection{Original mesh}\label{sec:Scott-Vog}

Recall that the standard Scott--Vogelius element~\cite{lrsBIBbk,ScottVogeliusA} $(\vhk_k(\mesh), \sdiv \vhk_k(\mesh))$ is known to be inf-sup stable for $k \geq 6$ on the regular Freudenthal triangulation as  in Example~\ref{ex:freudenthal}. 
This is proved in~\cite{zhang2011divergence}.  
Computational experiments on this mesh family~\cite{lrsBIBiu} suggest that the inf-sup condition holds for $k\geq 4$, independently of the mesh size $h$.

For this reason let us compute the approximate dimensions in both cases for general meshes for $(\vhk_k(\mesh), \Pi_{k-1}(\mesh))$, i.e., neglecting singular edges.  
By~\eqref{eq:poly-cont},~\eqref{eq:poly-disc} for $k= 6$ and $\widetilde{\mesh} = \mesh$, and applying~\eqref{eq:approx-quant} we have 
\begin{align*}
\dim \vhk_6(\mesh) &= 3 \dim P_6(\mesh) = 3(V + 5 E + 10 F + 10 T) \approx (52.5 \ebar - 87)V,\\
\dim \Pi_5(\mesh) &= 56 T \approx (28\ebar - 56) V,  
\end{align*}
compare the last column in Table~\ref{tbl:compdim}.
Using~\eqref{eq:poly-cont},~\eqref{eq:poly-disc} for $k= 4$ and $\widetilde{\mesh} = \mesh$, and applying~\eqref{eq:approx-quant} we have
\begin{align}\begin{split}\label{eq:dim-P4-T}
\dim \vhk_4(\mesh) &= 3 \dim P_4(\mesh) = 3(V + 3 E + 3 F + T) \approx (15\ebar-18)V,\\
\dim \Pi_3(\mesh) &= 20 T 
\approx (10\ebar - 20)V, % \notag
\end{split}
\end{align}
compare the fifth column in Table~\ref{tbl:compdim}.

\subsection{Alfeld split}\label{sec:Alfeld}
Alfeld splits subdivide tetrahedra into four subtetrahedra, see section~\ref{sec:splitmesh}. 
Given an initial mesh $\mesh$ we denote the resulting mesh by $\meshA$.
It is known~\cite{ref:zhang3DalfeldSplit} that the inf-sup condition is satisfied
for the pair $(\vhk_k(\meshA),\piq_{k-1}(\meshA))$ for $k\geq 3$ in $d = 3$ dimensions, see also~\cite{ref:GuzmaNeilanBarycntr,neilan2020stokes}. 
 Indeed, for this split one has that $\piq_{k-1}(\meshA)=\sdiv \vhk_k(\meshA)$. 
Let us compute the dimensions for the lowest order case $k = 3$. 

Using~\eqref{eq:poly-cont},~\eqref{eq:poly-disc} for $k= 3$ and $\widetilde{\mesh} = \meshA$, as well as Lemma~\ref{lem:split-meshes}~\ref{itm:split-A}, and~\eqref{eq:approx-quant} we find  
\begin{align} \notag
\dim \vhk_3(\meshA) &= 3 \dim P_3(\meshA) 
= 3(V_A + 2 E_A + F_A) \\
\label{eq:P2-TA}
&= 3(V + 2 E + F + 15T) 
 \approx (28.5\ebar -48)V,\\
\dim \Pi_2(\meshA) &= 10 T_A = 40T  \approx (20 \ebar - 40)V, \notag
\end{align}
compare the fourth column in Table~\ref{tbl:compdim}.

For the Scott--Vogelius elements~\cite{lrsBIBbk,ScottVogeliusA}
on $\mesh$ for $k=4$ the pressure space has only half the size, cf.~section~\ref{sec:Scott-Vog}. 
Note that a major 
contributor to the dimension is the large number of interior nodes of $\vhk_3(\meshA)$ compared to $\vhk_4(\mesh)$. 

\subsection{Worsey--Farin split}\label{sec:WF}
The Worsey--Farin split subdivides tetrahedra into 12 subtetrahedra.
Given an initial mesh $\mesh$, let $\meshW$ denote the resulting triangulation, cf. section~\ref{sec:splitmesh}. 
It is shown in~\cite{FGNZ.2022} that the inf-sup condition is satisfied for the pair $(\vhk_k(\meshW),\sdiv\vhk_k(\meshW))$ for $k\geq 1$ in $3$ dimensions. 
Therein for $k = 1$ the pressure space $\sdiv \vhk_1(\meshW)$ is characterized explicitly. 
See~\cite{ref:quadraticPowellSabinTets} for the case $k = 2$.  

Of course, $\sdiv \vhk_k(\meshW)\subset\piq_{k-1}(\meshW) $,
but it can be a strict subspace~\cite{ref:quadraticPowellSabinTets}, 
depending on the precise definition of the split.
In~\cite{FGNZ.2022}, the split is chosen so that on each
face in $\mesh$, there are 3 singular edges 
(the dashed gray lines in Figure~\ref{fig:alfeldsplit}, right).
A singular edge is an edge where exactly four faces meet, with opposite faces being coplanar. 
Thus there are 3 constraints per face~\cite[Lemma~4.3]{FGNZ.2022}. 

Let us consider the lowest-order case $k=1$. 
Using~\eqref{eq:poly-cont} for $k= 1$ and $\widetilde{\mesh} = \meshW$, Lemma~\ref{lem:split-meshes}~\ref{itm:split-WF}, and~\eqref{eq:approx-quant} we have 
\begin{align*}
	n_V \coloneqq \dim \vhk_1(\meshW) &= 3 \dim P_1(\meshW) 
	= 3 V_W = 3(V  + F + T) \\
	& \approx (4.5\ebar - 6)V,%\\
%	\dim \Pi_0(\meshW) &= T_W = 12 T  \approx (6 \ebar - 12)V, 
\end{align*}
compare the second column in Table~\ref{tbl:compdim}. 
The space $\sdiv\vhk_1(\meshW)$ is characterized in~\cite[Prop.~6.1]{FGNZ.2022} and using~\eqref{eq:approx-quant} its dimension is bounded by
\begin{align} \label{eqn:lowersypres}
	\dim\sdiv\vhk_1(\meshW) \leq
	4 (F-F_b)  + F_b 
	\approx 4F
	 \approx 4(\ebar - 2)V,
\end{align}
cf.~the first column in Table~\ref{tbl:compdim}. 
Note that this is smaller than
\begin{align*}
	n_\Pi \coloneqq	\dim \Pi_0(\meshW) &= T_W = 12 T  \approx 6( \ebar - 2)V,
\end{align*}
see~\eqref{eq:poly-disc} for $k= 1$ and $\widetilde{\mesh} = \meshW$, Lemma~\ref{lem:split-meshes}~\ref{itm:split-WF}, and~\eqref{eq:approx-quant}. 
The functions in the larger space $\piq_0(\meshW)$ but not in its subspace
$\sdiv\vhk_1(\meshW)$ are often referred to as \emph{missing modes}. 
Although there are 3 constraints per face~\cite[Lemma 4.2]{FGNZ.2022},
only two of them are linearly independent.
On the boundary, only one constraint is active. 
Thus the number of independent constraints is $2F-F_b=4T$, cf.~\eqref{eq:topocef}. 
Indeed, asymptotically for large meshes, using~\eqref{eq:topocef} the number of missing modes is approximately
\begin{align*}
	12T - 4(F-F_b) - F_b = 12T - 8T - 2F_b + 3 F_b = 4T + F_b,
\end{align*}
which is approximately $4T$ for sufficiently large meshes. 

In the introduction of~\cite{FGNZ.2022} it is suggested 
that one could work with the constraints on the full
variational space $\vhk_1(\meshW)\times \piq_0(\meshW)$ of dimension $n_V + n_\Pi$ as defined above.
This would require to use matrices of size $(n_V+n_\Pi)\times (n_V+n_\Pi)$. 
Note that the pressure space $\piq_0(\meshW)$ is larger than the velocity space $n_\Pi > n_V$ for $\ebar > 4$. 
In the Iterated Penalty Method (IPM)~\cite{lrsBIBgd,lrsBIBih}, 
the matrices are of size $n_V\times n_V$, and they are symmetric and 
positive definite, so IPM may be competitive as indicated in
\cite[Table 10]{FGNZ.2022}. 
Even if one would work directly with $\sdiv \vhk_1(\meshW)$ as pressure space,
which has dimension 
$\approx (4\ebar - 8)V$, 
this is still comparable to $n_V$. 

For the second order case, using~\eqref{eq:poly-cont},~\eqref{eq:poly-disc} for $k= 2$ and $\widetilde{\mesh} = \meshW$, as well as  Lemma~\ref{lem:split-meshes}~\ref{itm:split-WF},  and~\eqref{eq:approx-quant} we find that
\begin{equation} \label{eq:dim-V2-meshW}
\begin{split}
	\dim \vhk_2(\meshW) &= 3 \dim P_2(\meshW) 
	= 3(V_W +  E_W) = 3(V + E + 4 F + 9 T) \\ 
& \approx (27\ebar - 48)V. 
\end{split}
\end{equation}
 Thanks to a formula for the dimension $\dim\sdiv \vhk_2(\meshW)$ by \cite{ref:quadraticPowellSabinTets} and \cite[Lem.~5.10]{GLN22} we obtain
\begin{equation} \label{eq:dim-V2-meshW-2}
	\dim\sdiv \vhk_2(\meshW) = 28T+5F-1 \approx (19\ebar - 38)V,
\end{equation}
compare the second column in Table~\ref{tbl:compdim}. 

\subsection{Compare and contrast}
The lowest order case $k = 1$ on the Worsey--Farin split mesh has by far the smallest dimension 
of the methods we compare.  
But for $k=2$, the dimension is  larger than the one for the Scott--Vogelius element with $k=4$ on the original mesh,  as indicated in Tables~\ref{tbl:compdim} and~\ref{tbl:compdim-14}. 
The same is true for $k = 3$ on the Alfeld split, for which the dimensions are very close to the case of quadratic velocity functions on $\meshW$. 
Indeed, by~\eqref{eq:P2-TA},~\eqref{eq:dim-V2-meshW}, and~\eqref{eq:dim-P4-T} we obtain the asymptotic formulas $\dim \vhk_3(\meshA)= \dim \vhk_2(\meshW)+1.5\ebar V$ and
$\dim \vhk_2(\meshW)= \dim \vhk_4(\mesh)+(12\ebar-30)V$. 
Thus, for $\ebar \geq 4$ we find that, asymptotically, 
\begin{align*}
\dim \vhk_3(\meshA)\geq \dim \vhk_2(\meshW)+ 6V \geq \dim \vhk_4(\mesh)+24V.
\end{align*}

For the value $\ebar = 14$ we find that the pair of velocity and pressure spaces $(\vhk_2(\meshW),\sdiv \vhk_2(\meshW))$ and $(\vhk_3(\meshA),\Pi_2(\meshA))$ have more than double the dimension of the Scott--Vogelius element for $k = 4$, see Table~\ref{tbl:compdim-14},
i.e., the size of the velocity and pressure spaces for $k=4$ on $\mesh$ are about half that of the low-order, inf-sup stable split methods for $k\geq 2$.

Furthermore, we can see that the size of the velocity and pressure spaces 
for $k=6$ on $\mesh$ are only about a factor of 4 larger than for $k=4$ on $\mesh$. 
Also the degree $k = 6$ case is only about 50\% bigger than the split methods for $k \geq 2$. 

When using the Iterated Penalty Method only the dimension of the velocity space is relevant. 
 Table~\ref{tbl:compdim-14} allows us to draw a direct comparison also in this case, which results in the same ordering of dimensions. 

\begin{remark}[2D case]
For comparison let us derive the corresponding dimensions of inf-sup stable piecewise polynomial finite element spaces in 2D. 
For a mesh $\widetilde{\mesh}$ in 2D we have that 
\begin{align}\label{eq:poly-cont-2d-a}
\dim(\vhk_k(\widetilde{\mesh})) &= 2\Big(\widetilde{V} + (k-1) \widetilde{E} 
      + \tfrac{1}{2}(k-1)(k-2)\widetilde{T} \Big)\quad \text{ for } k \geq 1,
\end{align}
and for $k \geq 1$ that 
\begin{align}
	\label{eq:poly-disc-2d}
	\dim\big(\Pi_{k-1}(\widetilde{\mesh})\big) &= \dim( P_{k-1})  \widetilde{T}   
	=\half k(k+1) \widetilde{T} .
\end{align}

\begin{enumerate}
\item On the Alfeld split $\meshA$ the pair of spaces $(\vhk_{k}(\meshA),\Pi_{k-1}(\meshA))$ are inf-sup stable for $k \geq 2$, see~\cite{arnold1992quadratic,ref:QinThesis}, see also~\cite[Thm.~2.5]{neilan2020stokes}. 
For the case $k=2$ using Lemma~\ref{lem:meshquand-2D} and Remark~\ref{rmk:avg-2d} we thus have 
\begin{align*}
\dim(\vhk_2(\meshA)) &= 2 (V_A + E_A)
 =  2(V + E + 4 T)\\
 &= 24V - 10V_b -22 \chi + 10 \chi_b \approx 24 V,\\
 \dim(\Pi_1(\meshA)) &= 3 T_A = 9 T  = 18 V - 9 V_b - 18 \chi + 9 \chi_b \approx 18V,
\end{align*}
for meshes sufficiently large that we can neglect $V_b, \chi$, and $\chi_b$. 
\item On the Powell--Sabin split the space $(\vhk_1(\meshP),\sdiv(\vhk_1(\meshP)))$ is known to be inf-sup stable, see~\cite{ref:linearPowellSabinZhang}. 
The pressure space is contained in $\Pi_0(\meshP)$, but not fully characterized. 
Again by Lemma~\ref{lem:meshquand-2D} and Remark~\ref{rmk:avg-2d} we thus have 
\begin{align*}
	\dim(\vhk_1(\meshP)) &= 2 V_P  = 2 (V + E + T)\\
	& 
	 = 12 V - 4 V_b - 10 \chi + 4 \chi_b \approx 12 V, 
	\\
	\dim(\Pi_0(\meshP)) &=  T_P = 6 T  = 12V - 6 V_b - 12 \chi + 6 \chi_b \approx 12V,
\end{align*}
for meshes sufficiently large that we can neglect $V_b, \chi$, and $\chi_b$. 
Taking into account the singular vertices on the edges for the Powell--Sabin split we find 
\begin{align*}
	\dim(\sdiv \vhk_1(\meshP)) &\leq  6 T  - E  = 9 V - 5 V_b - 9 \chi - 5 \chi_b \approx 9 V. 
\end{align*}
\item The Scott--Vogelius element on the original mesh $\mesh$ is inf-sup stable for~$k \geq 4$, see~\cite{GS.2019}. 
For $k = 4$ using Lemma~\ref{lem:meshquand-2D} and Remark~\ref{rmk:avg-2d} we find  
\begin{align*}
	\dim(\vhk_4(\mesh)) &= 2(V + 3 E + 3 T ) = 32 V - 12 V_b - 30 \chi 
+ 12 \chi_b
\approx 32 V, 
	\\
	\dim(\Pi_3(\mesh)) &= 10 T = 20 V - 10 V_b - 20 \chi + 10 \chi_b \approx 20 V, 
\end{align*}
for meshes sufficiently large that we can neglect $V_b, \chi$, and $\chi_b$. 
\end{enumerate}
The bounds on the total approximate dimension of the pair of finite element spaces are $24V$ for the Powell--Sabin split and $k = 1$, $42 V $ for the Alfeld split and $k = 2$, and $52V$ for the original mesh and $k = 4$ (lowest order Scott--Vogelius element). 
The lowest order element on the Powell--Sabin has again the smallest dimension. 
Then, on the Alfeld split mesh the element of order $k = 2$ follows with less than double the dimension. 
And finally, the Scott--Vogelius element with $k = 4$ on the original mesh has only a slightly higher dimension. 
\end{remark}

\subsection{Alternative mixed methods}

There have been remarkable advances regarding exactly divergence-free methods in 3D, of which we have investigated some polynomial ones above. 
However, one should contrast this with alternatives such as the work-horse of fluid simulation, the lowest-order
(quadratic velocity, linear pressure) Taylor--Hood method~\cite{case2011connection}. 

Indeed, we may compare the exactly divergence-free finite elements, 
which have discontinuous, piecewise polynomial pressure, with
\begin{itemize}
	\item low-order 
inf-sup stable but 
 not exactly divergence-free methods, and with 
	\item exactly divergence-free methods with more complex velocity space arising as the $\curl$ of some $C^1$ conforming space. 
\end{itemize}

\subsubsection{Approximately divergence-free methods}\label{subs:approx-div}
For the Freudenthal mesh a comparison of the dimensions of low-order elements in general dimension is presented in~\cite[Sec.~3]{DST.2022}. 
In 3D for the choice $\ebar = 14$ this comes down to total dimension (of pressure and velocity space) 
\begin{itemize}
\item $21V$ for the Bernardi--Raugel  element~\cite{BR.1985},
\item $22V$ for the MINI element~\cite{ABF.1984},
\item $25V$ for the lowest-order Taylor--Hood element~\cite{TH.1973},
\end{itemize}
see~\cite[Table~2]{DST.2022}. 

Note 
however, 
that in 3D the smallest element available is not 
any of these elements, but rather
the \emph{reduced Taylor--Hood element}, first presented in~\cite[Sec.~3.1]{DST.2022}. 
Its velocity space consists of continuous piecewise affine functions and tangential edge bubble functions, and the pressure space consists of continuous piecewise affine functions. 
This results in a total dimension of
\begin{itemize}
	\item $11V$ for the reduced Taylor--Hood element \cite{DST.2022}. 
\end{itemize}
In terms of approximation order the Bernardi--Raugel, MINI, and reduced Taylor--Hood element have velocity spaces that are enriched $P_1$ elements, whereas the Taylor--Hood has $P_2$ velocity. 

For higher-order approximately divergence-free methods, note that the $4$th order Taylor--Hood velocity space is the same as the Scott--Vogelius velocity space,
but the Taylor--Hood pressure space is larger than the Scott--Vogelius pressure space.
On the other hand, one need not work directly with the pressure space for Scott--Vogelius element. 
Instead, one may utilize the Iterated Penalty Method to solve directly for the
divergence-free velocity~\cite{lrsBIBih}, with the additional benefit that the 
linear systems are positive definite.

\subsubsection{Exactly divergence-free with other velocity spaces}

In~\cite{GN.2014b} mixed finite elements with exact divergence constraints are derived from the 3D generalization of the $C^1$ conforming Zienkiewicz elements using rational bubble functions.  
The reduced version of this has total dimension of velocity and pressure space $$3V + F  + T \approx (3 + 12 + 6)V = 21V.$$ 
Note that this is the same as for the Bernardi--Raugel element and indeed the same degrees of freedom are used. 

This comparison shows that there is a price to pay for exact divergence constraints in combination with a Lagrange velocity space. 
If one of the requirements is dropped, then elements with considerably smaller dimension can be chosen. 

\section{Insights into the discrete Stokes complex}
\label{sec:compldig}

Counting the degrees of freedom can provide conceptual advances and we give one example of an application here. 
More specifically, we use this to obtain a result on a discrete Stokes complex.  
Although this is a small advance we can envisage further possible applications by 
the relevance of such techniques in the historical context. 

The simple act of counting the degrees of freedom in a two-dimensional finite element 
space~\cite{strang1973piecewise} led to a flood of research involving diverse areas
of mathematics, including algebraic geometry~\cite{billera1988homology}. 
One result~\cite{lrsBIBaf} became a critical part of the Stokes complex 
\cite{farrell2021reynolds} in two dimensions.
The counting techniques introduced here are very primitive by comparison, but they provide
a way to count in three dimensions that may be useful in other contexts.

One application of the above counting strategy is to address the question of 
identifying the precursor space in a discrete Stokes complex. 
By \emph{precursor space} of a velocity space $\vhk_k(\mesh) \subset C(\overline{\Omega})$, we mean a subspace of  $\vhk_{k+1}(\mesh)$, the curl of which coincides with the space of divergence-free functions in $\vhk_k(\mesh)$, 
\begin{align}\label{eqn:zeetudef}
	\bZ_k(\mesh) \coloneqq \set{\vv\in \vhk_k(\mesh)}{\sdiv\vv=0}.
\end{align}
Thus, such a precursor space would be part of a discrete Stokes complex together with $\vhk_k(\mesh)$ and the pressure space $\sdiv \vhk_k(\mesh)$. 

In $d = 2$ dimensions the identification of the full Stokes complex is relatively
simple~\cite{farrell2021reynolds}. 
The space $\bZ_k(\mesh)$ of divergence-free continuous piecewise polynomial functions can be represented by the $\curl$ of all $C^1$ piecewise polynomials of degree $k+1$. 
This means, that the precursor space is the space of scalar functions $S_{k+1}(\mesh) \coloneqq P_{k+1}(\mesh) \cap C^1(\overline{\Omega})$, satisfying $\curl(S_{k+1}(\mesh)) = \bZ_k(\mesh)$. 
Often it is possible to identify the space of all $C^1$ piecewise polynomials. 
Indeed, for degree $k \geq 3$ a nodal basis is available
\cite{lrsBIBaf,lrsBIBim}. 
For some split mesh methods the same is true for lower polynomial degree. 
For example, on the Malkus split the Powell basis~\cite{powell1973piecewise} of $C^1$ piecewise quadratics is well known. 

In $d = 3$ dimensions the precursor space of $\vhk_k(\mesh)$ consists of vector-valued functions and shall be denoted by $\bUps_{k+1}(\mesh) \subset \vhk_{k+1}(\mesh)$. 
By the assumption that $\vhk_k(\mesh)$ is conforming, the curl of functions in $\bUps_{k+1}(\mesh)$ has to be continuous. 
But this does not necessarily require that
$\bUps_{k+1}(\mesh) \subset C^1(\overline{\Omega})$. 
Nevertheless, it is interesting to investigate spaces of $C^1$ piecewise 
polynomials on various split meshes, as split meshes were originally developed to construct such spaces. 
At the very least, their $\curl$ is a subspace of the divergence-free space. 
For this purpose we denote 
\begin{align}\label{def:C1-precursor}
	\bS_r(\mesh) \coloneqq \vhk_{r}(\mesh) \cap C^1(\overline{\Omega}),\quad \text{ for }  r \in \mathbb{N}. 
\end{align}
Since we have that $\curl \bS_{k+1}(\mesh) \subset \bZ_k(\mesh) =  \{\vv \in \vhk_k(\mesh) \colon \sdiv \vv = 0\}$, 
it follows that $\bS_{k+1}(\mesh) \subset \bUps_{k+1}(\mesh)$. 

For the generalized Powell--Sabin split as mentioned in section~\ref{sec:splithreed} above, each tetrahedron is split into $24$ tetrahedra. 
With $\meshP$ denoting the resulting mesh, the space $\bS_2(\meshP)$ is studied in~\cite{ref:worseypipernotfarin}. 
With the arguments above it is a candidate for a precursor space of $\vhk_1(\meshP)$. 
To the best of our knowledge nothing is known about the inf-sup stability of $(\vhk_{1}(\meshP), \sdiv \vhk_{1}(\meshP))$ in three dimensions. 

On the Worsey--Farin split $\meshW$ as introduced in section~\ref{sec:splithreed} it is also not clear how to find a precursor space of $\vhk_{1}(\meshW)$. 
If we increase the degree to $k=2$, then the original Worsey--Farin
paper~\cite{ref:cubicWorseyFarin} provides the space $S_3(\meshW)$, the scalar analog of $\bS_3(\meshW)$. 
The vectorial version $\bS_3(\meshW)$ serves as candidate for a precursor space of $\vhk_2(\meshW)$. 
For each component, the degrees of freedom of $\bS_3(\meshW)$ are 4 at each vertex 
and 2 on each edge, so with~\eqref{eq:approx-quant} we find
\begin{equation}\label{eqn:essthred}
	\dim \bS_3(\meshW)=3(4V+2E)\approx 3(\ebar+4)V,
\end{equation}
for dimension $d = 3$. 
Recalling the notation %
%\begin{equation}\label{eqn:zeetudef}
$\bZ_2(\meshW) = \set{v\in \vhk_2(\meshW)}{\sdiv  v=0}$,  
%\end{equation}
%
%Then, 
the gap between 
\begin{align*}
	\curl \bS_3(\meshW) \subset \bZ_2(\meshW),
\end{align*}
can be significant, as the following theorem shows. 

\begin{theorem}\label{thm:precurwf}
For a mesh $\mesh$ in 3D sufficiently large that we may neglect the boundary vertices and $\chi_b, \chi$, cf.~Remark~\ref{rmk:3d-approx-quant}, we consider its Worsey--Farin split $\meshW$ (cf.~section~\ref{sec:splithreed}). 
	For $\ebar>4.4$,
 the space $\bZ_2(\meshW)$ defined in~\eqref{eqn:zeetudef} 
is strictly larger than the space $\curl \bS_3(\meshW)$, with $\bS_3(\meshW)$ as defined in~\eqref{def:C1-precursor}. 
	In particular, this is the case for large regular meshes such as the Freudenthal triangulation, cf.~Example~\ref{ex:freudenthal}, and for large meshes that arise from sufficiently many uniform mesh refinements, cf.~Lemma~\ref{lem:ebar-asymp}. 
\end{theorem}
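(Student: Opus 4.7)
The plan is to compare dimensions on both sides of the inclusion $\curl \bS_3(\meshW) \subset \bZ_2(\meshW)$, using only the formulas already collected in the paper. Since both spaces are finite dimensional subspaces of $\vhk_2(\meshW)$, strict inclusion follows as soon as
\[
\dim \curl \bS_3(\meshW) < \dim \bZ_2(\meshW),
\]
and for the left-hand side the trivial bound $\dim \curl \bS_3(\meshW) \leq \dim \bS_3(\meshW)$ will suffice.

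First I would compute $\dim \bZ_2(\meshW)$ by the rank-nullity theorem applied to the divergence operator restricted to $\vhk_2(\meshW)$, so that
\[
\dim \bZ_2(\meshW) = \dim \vhk_2(\meshW) - \dim \sdiv \vhk_2(\meshW).
\]
Both terms on the right have asymptotic expressions already derived in the paper: from~\eqref{eq:dim-V2-meshW} we have $\dim \vhk_2(\meshW) \approx (27\ebar - 48)V$, and from~\eqref{eq:dim-V2-meshW-2} we have $\dim \sdiv \vhk_2(\meshW) \approx (19\ebar - 38)V$. Subtracting gives
\[
\dim \bZ_2(\meshW) \approx (8\ebar - 10)V,
\]
in the asymptotic regime where boundary quantities and $\chi,\chi_b$ can be neglected, as allowed by the hypothesis of the theorem.

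Next I would bound $\dim \curl \bS_3(\meshW)$ above by $\dim \bS_3(\meshW)$, whose asymptotic size is exactly what is recorded in~\eqref{eqn:essthred}, namely $\dim \bS_3(\meshW) \approx 3(\ebar + 4)V = (3\ebar + 12)V$. Combining with the previous step, strict inclusion holds as soon as
\[
(3\ebar + 12)V < (8\ebar - 10)V,
\]
which after rearrangement is equivalent to $5\ebar > 22$, that is, $\ebar > 4.4$. This is precisely the hypothesis of the theorem. The final statement concerning regular meshes and uniformly refined families then follows from Example~\ref{ex:freudenthal} and Lemma~\ref{lem:ebar-asymp}, each of which yields $\ebar$ arbitrarily close to $14$, well above the $4.4$ threshold.

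The main obstacle is not algebraic but conceptual: one must be comfortable enough with the rank-nullity bookkeeping that the inequality $\dim \curl \bS_3(\meshW) \leq \dim \bS_3(\meshW)$ is used with the correct direction, and one must be precise about the meaning of ``$\approx$'' so that the strict inequality at $\ebar > 4.4$ is genuinely attained for meshes of sufficient size (where the neglected lower-order terms $V_b$, $\chi$, $\chi_b$ are dominated by the leading $V$-terms). Once those points are made explicit, the proof reduces to the three line arithmetic sketched above.
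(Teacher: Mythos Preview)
Your proposal is correct and follows essentially the same route as the paper: apply rank--nullity to compute $\dim \bZ_2(\meshW)$ from~\eqref{eq:dim-V2-meshW} and~\eqref{eq:dim-V2-meshW-2}, bound $\dim\curl\bS_3(\meshW)$ above by $\dim\bS_3(\meshW)$ via~\eqref{eqn:essthred}, and compare to obtain the threshold $\ebar>4.4$. The paper records the gap as $(5\ebar-22)V$ rather than stating the inequality directly, but the argument is the same.
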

\begin{proof}
	Applying~\eqref{eq:dim-V2-meshW} and~\eqref{eq:dim-V2-meshW-2} we obtain with~\eqref{eq:approx-quant} that
	\begin{equation}\label{eqn:zeetuvar}
		\begin{split}
			\dim \bZ_2(\meshW)&= \dim \vhk_2(\meshW)-\dim\sdiv\vhk_2(\meshW) \\
			&\geq 3(V+E+4F+9T)-(28T +5F) \\
			& \approx (27\ebar - 48)V - (19\ebar - 38)V =  (8\ebar-10)V.
		\end{split}
	\end{equation}
	From~\eqref{eqn:essthred} it follows that
	\begin{align*}
		\dim\curl \bS_3(\meshW) \leq
		\dim \bS_3(\meshW) =3(4V+2E)  \approx 3(\ebar+4)V.
	\end{align*}
	Combining both estimates we find that 
	\begin{equation}\label{eqn:notzeetuvar}
		\begin{split}
			\dim \bZ_2(\meshW)- \dim\curl \bS_3(\meshW)
		&		
		\geq
		 3V+3E+7F-T - 3(4V+2E)\\
			&
			=-9V-3E+7F-T\\
			&
			\approx  \big(5 \ebar-22\big)V.
		\end{split}
	\end{equation}
\end{proof}

For this reason the precursor space of $\vhk_2(\meshW)$ has to be larger than $\bS_3(\meshW)$. 

\section{Conclusions and perspectives}
\label{sec:conclu}

We use a way of counting mesh quantities in three dimensions in terms of the average number of edges $\ebar$ meeting at a vertex.
For $\ebar$ we presented upper and lower bounds. 
Furthermore, we reviewed asymptotic limits for a range of uniform mesh refinement schemes. 
These allowed us to compare the dimensions of various finite element spaces.  
We applied this to pairs of finite element spaces for problems with 
a divergence constraint, such as the incompressible Navier--Stokes equations. 
The mesh-counting techniques may be of independent interest in other contexts.

 Although the use of split meshes lowers the degree for which finite element pairs with exact divergence constraints are stable, it may not yield smaller spaces. 
Indeed, only the lowest order example on the Worsey--Farin split has smaller dimension than the Scott--Vogelius element of polynomial degree $4$ on the original mesh. 
The fact that there is only one interior node per tetrahedron limits the size. 
The largest contributors are the face nodes, since they are nearly twice as plentiful as edge nodes. 
This means that when only considering the dimensions of the spaces, the most attractive higher order method is the Scott--Vogelius element for polynomial degree $k = 4$. 

When giving up either the exact divergence constraint or the fact that the velocity space is a Lagrange space, then elements with dimensions smaller by a factor of at least $5$ are available, see subsection~\ref{subs:approx-div}.

Perhaps the main advantage of split meshes is that issues related to nearly singular vertices and edges are avoided. 
Still, the lower dimension of standard Scott--Vogelius methods of higher polynomial degree motivates to understand how to ameliorate nearly singular simplices in three dimensions.

\section{Acknowledgments}

We thank Patrick Farrell and Michael Neilan for valuable discussions and suggestions. 
We are grateful to the anonymous referees for their careful reading and helpful suggestions. 

%% New biber style  (Remove \i!!!)
\printbibliography 

%\bibliographystyle{abbrv}
%\bibliographystyle{alpha}
%\bibliography{tdstorefs}

\end{document}